\numberwithin{equation}{section}
\newtheorem{theorem}{Theorem}
\newaliascnt{proposition}{theorem}
\newtheorem{proposition}[proposition]{Proposition}
\newaliascnt{lemma}{theorem}
\newtheorem{lemma}[lemma]{Lemma}
\theoremstyle{definition}
\newtheorem{example}{Example}
\newtheorem{remark}{Remark}
\newaliascnt{corollary}{theorem}
\newaliascnt{definition}{theorem}
\newtheorem{definition}[definition]{Definition}
\newcounter{hypA'}
\newenvironment{hyp}[1]{
\begin{enumerate}[label=(\textbf{\sf #1}-\arabic*),resume=hyp#1]\begin{sf}}
{\end{sf}\end{enumerate}}
\def\rme{\mathrm{e}}
\def\Xset{\mathsf{X}}
\def\Yset{\mathsf{Y}}
\def\Xsigma{\mathcal{X}}
\def\Ysigma{\mathcal{Y}}
\def\met{\Delta}
\def\rset{\ensuremath{\mathbb{R}}}
\def\zset{\ensuremath{\mathbb{Z}}}
\def\mcf{\mathcal{F}}
\def\eqsp{\;}
\def\PE{\mathbb{E}}
\def\PP{\mathbb{P}}
\def\rmd{\mathrm{d}}
\newcommandx\LogInt[5][1=\theta,4=,5=Y]{\upsilon_{#4}^{#1}\langle {#5}_{#2:#3} \rangle}
\newcommand{\wrt}{with respect to}
\newcommand{\lhs}{left-hand side}
\newcommand{\as}{\mbox{-a.s.}}
\newcommand{\ie}{i.e.}
\newcommandx{\aslim}[1]{\ensuremath{\stackrel{#1-\text{a.s.}}{\longrightarrow}}}
\newcommandx\sequence[3][2=n,3=\zset]{\ensuremath{\left(#1_{#2}\right)_{#2 \in #3}}}
\newcommand\nsequence[2]{\ensuremath{\left(#1\right)_{#2}}}
\newcommandx\dsequence[4][3=n,4=\zset]{\ensuremath{\left( (#1_{#3},#2_{#3})\right)_{#3 \in #4}}}
\newcommand{\CPE}[3][]
{\ifthenelse{\equal{#1}{}}%
{\mathbb{E}\left[\left. #2 \, \right| #3 \right]}
{\mathbb{E}_{#1}\left[\left. #2 \, \right| #3 \right]}
}
\newcommand{\CPEu}[4][]
{\ifthenelse{\equal{#1}{}}%
{\mathbb{E}\left[\left. #2 \, \right| #3 \right]}
{\mathbb{E}^{#1}_{#2}\left[\left. #3 \, \right| #4 \right]}
}
\newcommand{\CPEv}[3][]
{\ifthenelse{\equal{#1}{}}%
{\mathbb{E}_\star\left[\left. #2 \, \right| #3 \right]}
{\mathbb{E}_\star_{#1}\left[\left. #2 \, \right| #3 \right]}
}
\newcommand{\CPP}[3][]
{\ifthenelse{\equal{#1}{}}%
{\mathbb{P}\left[\left. #2 \, \right| #3 \right]}
{\mathbb{P}_{#1}\left[\left. #2 \, \right| #3 \right]}
}
\newcommand{\CPPu}[3][]
{\ifthenelse{\equal{#1}{}}%
{\mathbb{P}\left[\left. #2 \, \right| #3 \right]}
{\mathbb{P}^{#1}\left[\left. #2 \, \right| #3 \right]}
}
\newcommandx{\chunk}[3]%
{\ensuremath{#1}_{#2:#3}}
\def\1{\mathbbm{1}}
\newcommandx\proj[2][1=,2=]{
\ifthenelse{\equal{#1}{}}
{\operatorname{X}}
{\operatorname{X}_{#1:#2}}i
}
\newcommand{\eqdef}{:=}
\newcommand{\set}[2]{\left\{#1\, : \, #2\right\}}
\newcommandx\lkdM[3][1=,3=]{
\ifthenelse{\equal{#2}{}}
{ \mathsf{L}_{#1}^{#3}}
{ \mathsf{L}_{#1}^{#3}\langle #2\rangle}
}
\newcommandx\lkdMStat[3][1=,3=]{
\ifthenelse{\equal{#2}{}}
{ \bar{\mathsf{L}}_{#1}^{#3}}
{ \bar{\mathsf{L}}_{#1}^{#3}\langle #2 \rangle}
}
\newcommandx\lkd[3][1=,3=]{
\ifthenelse{\equal{#2}{}}
{ \ell_{#1}^{#3}}
{ \ell_{#1}^{#3}\langle #2\rangle}
}
\newcommandx\lkdStat[3][1=,3=]{
\ifthenelse{\equal{#2}{}}
{ \bar \ell_{#1}^{#3}}
{ \bar \ell_{#1}^{#3}\langle #2 \rangle}
}
\newcommand{\mlY}[1]{\hat\theta_{#1}}
\newcommand{\argmax}{\mathop{\mathrm{argmax}}}
\newcommandx{\normLip}[2][1=]{\mathrm{Lip}(#2;#1)}
\newcommandx{\wass}[2][1]{\lVert #2\rVert_{#1}}
\newcommandx{\wasser}[3][1=]{\mathcal{W}_{#1}\left(#2,#3\right)}
\newcommandx{\proho}[3][1=]{\mathcal{P}_{#1}\left(#2,#3\right)}
\newcommandx{\dobru}[2][1=]{\dobrush_{#1}\left( #2\right)}
\newcommand{\dobrush}{\Delta}
\newcommandx{\Pcan}[2][1=,2=]{\mathbb{P}_{#1}^{#2}}
\newcommandx{\Ecan}[2][1=,2=]{\mathbb{E}_{#1}^{#2}}
\newcommand{\Xinit}{\xi}
\newcommandx\cesp[4][1=,2=]{\ensuremath{{\mathbb E}_{#1}^{#2}\left[ \left. #3 \right| #4 \right]}}
\newcommandx{\f}[2][1=\theta]{\psi^{#1}\langle #2 \rangle}
\newcommandx{\F}[2][1=\theta]{\Psi^{#1}\langle #2 \rangle}
\newcommand{\thv}{{\theta_\star}}
\newcommandx{\kap}[3][1=\theta]{
\ifthenelse{\equal{#3}{}}
{\kappa^{#1}\langle #2\rangle }
{\kappa^{#1}\langle #2\rangle \left(#3\right)}
}
\def\lnp{\ln^{+}}
\def\zsetp{\zset_{+}}
\def\zsetn{\zset_{-}}
\def\zsetpnz{\zset_{+}^*}
\def\zsetnnz{\zset_{-}^*}
\def\rsetp{\rset_{+}}
\def\rsetn{\rset_{-}}
\def\rsetpnz{\rset_{+}^*}
\def\rsetnnz{\rset_{-}^*}
\newcommandx{\probdoeblin}[3][1=]{\mu^{#1}_{#2}\langle #3 \rangle}
\newcommand{\Pblock}[2][]
{\ifthenelse{\equal{#1}{}}{\boldsymbol{\operatorname{L}}\langle#2\rangle}{\boldsymbol{\operatorname{L}}^{#1}\langle#2\rangle}
}
\newcommand{\ConPblock}[3][]
{\ifthenelse{\equal{#1}{}}{\boldsymbol{\operatorname{L}}\langle#2|#3\rangle}{\boldsymbol{\operatorname{L}}^{#1}\langle#2|#3\rangle}
}
\newcommand{\pblock}[2][]
{\ifthenelse{\equal{#1}{}}{\mathbf{\ell}\langle#2\rangle}{\mathbf{\ell}^{#1}\langle #2\rangle}
}
\def\simplex{\mathsf{P}}
\newcommandx{\limlike}[4][1=\theta, 2=\theta_\star]{p^{#1,#2}\left( #3\mid #4 \right)}
\def\Xmet{\rmd_\Xset}
\begin{document}






\title[Partially observed Markov chains]{The maximizing set of the asymptotic normalized log-likelihood for
  partially observed Markov chains}


%
%
%
%

%
%

\author[R. Douc et al.]{Randal Douc$^1$}
\address{$^1$Department  CITI\\
CNRS UMR 5157\\
Telecom Sudparis\\
Evry\\
France}
\email[R.~Douc]{\href{mailto:randal.douc@telecom-sudparis.eu}{randal.douc@telecom-sudparis.eu}}

\author[ ]{Fran\c{c}ois Roueff$^2$}
\address{$^2$Institut Mines-Telecom\\
Telecom Paristech\\
CNRS LTCI\\
 Paris\\
 France}
\email[F.~Roueff]{\href{mailto:roueff@telecom-paristech.fr}
{roueff@telecom-paristech.fr}}

\author[ ]{Tepmony Sim$^2$}
\email[T.~Sim]{\href{mailto:sim@telecom-paristech.fr}{sim@telecom-paristech.fr}}

\subjclass[2010]{Primary 60J05, 62F12; Secondary 62M05, 62M10}

\keywords{consistency, ergodicity, hidden Markov models, maximum likelihood, observation-driven models, time series of counts}

\begin{abstract}
  This paper deals with a parametrized family of partially observed bivariate
  Markov chains. We establish that, under very mild assumptions, the limit of
  the normalized log-likelihood function is maximized when the parameters belong to the
  equivalence class of the true parameter, which is a key feature for obtaining
  the consistency of the maximum likelihood estimators (MLEs) in well-specified
  models. This result is obtained in the general framework of partially
  dominated models. We examine two specific cases of interest, namely, hidden
  Markov models (HMMs) and observation-driven time series models. In contrast with previous
  approaches, the identifiability is addressed by relying on the uniqueness of the
  invariant distribution of the Markov chain associated to the complete data,
  regardless its rate of convergence to the equilibrium.
\end{abstract}

%
%

\maketitle

\sloppy

\section{Introduction}

Maximum likelihood estimation is a widespread method for identifying a
parametric model of a time series from a sample of observations.  Under a
well-specified model assumption, it is of prime interest to show the
consistency of the estimator, that is, its convergence to the true parameter,
say $\thv$, as the sample size goes to infinity. The proof generally involves
two important steps: 1) the maximum likelihood estimator (MLE) converges to the
maximizing set $\Theta_\star$ of the asymptotic normalized log-likelihood, and 2)
the maximizing set indeed reduces to the true parameter. The second step is
usually referred to as solving the \emph{identifiability} problem but it can
actually be split in two sub-problems: 2.1) show that any parameter in
$\Theta_\star$ yields the same distribution for the observations as for the
true parameter, and 2.2) show that for a sufficiently large sample size, the set
of such parameters reduces to $\thv$. Problem 2.2 can be difficult to solve,
see \cite{allman:matias:rhodes:2009,gassiat:cleymen:robin:2013} and the
references therein for recent advances in the case of hidden Markov models
(HMMs).  Nevertheless, Problem~2.1 can be solved independently, and with
Step 1 above, this directly yields that the MLE is
consistent in a weakened sense, namely, that the estimated parameter converges
to the set of all the parameters associated to the same distribution as the one of
the observed sample. This consistency result is referred to as
\emph{equivalence-class consistency}, as introduced by \cite{leroux:1992}.  In
this contribution, our goal is to provide a general approach to solve
Problem~2.1 in the general framework of partially observed Markov models. These include
many classes of models of interest, see for instance \cite{pieczynski:2003} or
\cite{ephraim:mark:2012}. The novel aspect of this work is that the result
mainly relies on the uniqueness of the invariant distribution of the Markov chain
associated to the complete data, regardless its rate of convergence to the
equilibrium.  We then detail how this approach applies in the context of two
important subclasses of partially observed Markov models, namely, the class of
HMMs and the class of observation-driven time series models.

In the context of HMMs, the consistency of the MLE is of primary importance,
either as a subject of study (see
\cite{leroux:1992,douc:moulines:ryden:2004,douc:moulines:olsson:vanhandel:2011})
or as a basic assumption (see
\cite{bickel:ritov:ryden:1998,jensen:petersen:1999}). The characterization of
the maximizing set $\Theta_\star$ of the asymptotic log-likelihood (and thus
the equivalence-class consistency of the MLE) remains a delicate question for
HMMs. As an illustration, we consider the following example. In this
example and throughout the paper we denote by $\rsetp=[0,\infty)$,
$\rsetn=(-\infty,0]$,  $\rsetpnz=(0,\infty)$ and $\rsetnnz=(-\infty,0)$, the sets of nonnegative,
nonpositive, (strictly) positive and  (strictly) negative real numbers, respectively. Similarly, we
use the notation $\zsetp$, $\zsetn$, $\zsetpnz$  and $\zsetnnz$ for the corresponding
subsets of integers. Also, $a^+=\max(a,0)$ denotes the nonnegative part of $a$.
\begin{example}\label{exple:hmm}
  Set $\Xset=\rsetp$, $\Xsigma={\mathcal B}(\rsetp)$, $\Yset=\rset$ and
  $\Ysigma={\mathcal B}(\rset)$ and define an HMM on $\Xset
  \times \Yset$ by the following recursions:
\begin{align}\label{eq:def:exemple:hmm}
\begin{split}
&X_{k}=\left(X_{k-1}+U_k-m\right)^+\eqsp, \\
&Y_ {k}= a X_{k}+V_k\eqsp,
\end{split}
\end{align}
where $(m, a) \in \rsetpnz \times \rset$, and the sequence
$\dsequence{U}{V}[k][\zsetp]$ is independent and identically distributed (i.i.d.) and is independent from $X_0$.
This Markov model $\sequence{X}[k][\zsetp]$ was proposed by  \cite{tuominen:tweedie:1994} and further considered by
\cite{jarner:roberts:2002} as an example of polynomially ergodic Markov chain,
under specific assumptions made on $U_k$'s. Namely, if $U_k$'s are centered
and $\PE[\rme^{\lambda U_k^+}]=\infty$ for any $\lambda>0$, it can be shown that the chain
$\sequence{X}[k][\zsetp]$ is not geometrically ergodic (see \autoref{lem:exemple:hmm} below). In such a situation, the exponential
separation of measures condition introduced in
\cite{douc:moulines:olsson:vanhandel:2011} seems difficult to check. We will
show, nevertheless, in \autoref{prop:consistanceYexample}, that under some mild
conditions the chain $\sequence{X}[k][\zsetp]$ is ergodic and the equivalence-class consistency
holds.
\end{example}

Observation-driven time series models were introduced by \cite{cox:1981} and later
considered, among others, by \cite{streett:2000, davis:dunsmuir:streett:2003, fokianos:rahbek:tjostheim:2009, neuman:2011, doukhan:fokianos:tjostheim:2012} and
\cite{dou:kou:mou:2013}. The celebrated GARCH(1,1) model introduced by
\cite{bollerslev:1986} is an observation-driven model as well as most of the
models derived from this one, see \cite{bollerslev08-glossary} for a list of
some of them. This class of models has the nice feature that the (conditional)
likelihood function and its derivatives are easy to compute. The consistency of the MLE can however be cumbersome and is often derived using computations specific to
the studied model. When the observed variable is discrete, general consistency
results have been obtained only recently in \cite{davis:liu:2012} or \cite{dou:kou:mou:2013} (see also in
\cite{henderson:matteson:woodard:2011} for the existence of stationary and ergodic solutions to some observation-driven time series models). However, in
these contributions, the way of proving that the maximizing set $\Theta_\star$
reduces to $\{\thv\}$ requires checking
specific conditions in each given example and seems difficult to assert in a
more general context, for instance when the distribution of the observations
given the hidden variable also depends on an unknown parameter. Let us
describe two such examples.
The first one (\autoref{example:Nbigarch:defi}) was introduced in \cite{zhu:2011}. Up to our knowledge the
consistency of the MLE has not been treated for this model.

\begin{example}\label{example:Nbigarch:defi}
The negative binomial integer-valued GARCH (NBIN-GARCH$(1,1)$) model is defined
by:
\begin{align}\label{eq:def:nbingarch}
\begin{split}
&X_{k+1}=\omega+a X_k+bY_{k}\eqsp, \\
&Y_ {k+1}|\chunk{X}{0}{k+1},\chunk{Y}{0}{k}\sim \mathcal{NB} \left(r,\frac{X_{k+1}}{1+X_{k+1}}\right)\eqsp,
\end{split}
\end{align}
where $X_k$ takes values in $\Xset=\rsetp$,  $Y_k$ takes values in $\zsetp$ and
$\theta=(\omega, a, b,r)\in(\rsetpnz)^4$ is an unknown parameter. In \eqref{eq:def:nbingarch},
$\mathcal{NB}(r, p)$ denotes the negative binomial distribution with parameters
$r>0$ and $p\in (0,1)$, whose probability function is $\frac{\Gamma(k+r)}{k!\Gamma(r)}
p^{r}(1-p)^k$ for all $k\in\zsetp$, where $\Gamma$ stands
for the Gamma function.
\end{example}
The second example, \autoref{example:nmgarch:defi}, proposed by \cite{haas2004mixed} and \cite{alexander2006normal}, is a natural extension of GARCH processes, where the
usual Gaussian conditional distribution of the observations given the hidden
volatility variable is replaced by a mixture of Gaussian distributions given a
hidden vector volatility variable. Up to our knowledge, the usual
consistency proof of the MLE for the GARCH cannot be directly adapted to this
model.

\begin{example}
\label{example:nmgarch:defi}
The normal mixture GARCH (NM$(d)$-GARCH$(1,1)$) model is defined by:
\begin{align}\label{eq:mmdm:def}
\begin{split}
&\mathbf{X}_{k+1}=\boldsymbol{\omega}+\mathbf{A}\mathbf{X}_k+Y_{k}^2\mathbf{b} \eqsp,\\
&Y_ {k+1}|\chunk{\mathbf{X}}{0}{k+1},\chunk{Y}{0}{k}\sim G^\theta(\mathbf{X}_{k+1};\cdot) \eqsp, \\
&G^\theta(\mathbf{x};\rmd y)=
\left(\sum_{\ell=1}^{d}{\gamma_\ell\frac{\rme^{-y^2/{2x_{\ell}}}}{(2\pi
      x_{\ell})^{1/2}}}\right)\rmd y\eqsp,\; \mathbf{x}=(x_i)_{1\leq i\leq d}\in(\rsetpnz)^d,\;y\in\rset\eqsp,
\end{split}
\end{align}
where $d$ is a positive integer; $\mathbf{X}_k=[X_{1,k} \ldots X_{d,k}]^T$ takes values in  $\Xset=\rsetp^d$;
$\boldsymbol{\gamma}=[\gamma_1\ldots\gamma_d]^T$ is a $d$-dimensional vector of
mixture coefficients belonging to the $d$-dimensional simplex
$\simplex_d=\{\boldsymbol{\gamma}\in\rsetp^d:\; \sum_{\ell=1}^{d}{\gamma_\ell}=1\}$;
$\boldsymbol{\omega}$, $\mathbf{b}$ are $d$-dimensional
vector parameters with positive and nonnegative entries, respectively;
$\mathbf{A}$ is a $d\times d$ matrix parameter with nonnegative entries; and
$\theta=\left(\boldsymbol{\gamma}, \boldsymbol{\omega}, \mathbf{A},
  \mathbf{b}\right)$.
\end{example}

The paper is organized as follows. \autoref{sec:identifiability} is dedicated to the main result (\autoref{thm:mainresult}) which
shows that the argmax of the limiting criterion reduces to the equivalence
class of the true parameter, as defined in \cite{leroux:1992}. The general setting is introduced in \autoref{subsec:general:setting}. The theorem is stated and proved in \autoref{subsec:main}. In \autoref{subsec:backward}, we focus on the kernel involved in the
assumptions, and explain how it can be obtained explicitly. Our general assumptions are then shown to hold for two important classes of partially observed Markov models:
\begin{enumerate}[label=-]
\item First, the HMMs  described in \autoref{sec:hmm}, for
which the equivalence-class consistency of the MLE is derived under simplified
assumptions. The polynomially ergodic HMM of \autoref{exple:hmm} is treated as
an application of this result.
\item Second, the observation-driven time series models described in
  \autoref{sec:observ-driv-model}. The obtained results apply to the models of
  \autoref{example:Nbigarch:defi} and \autoref{example:nmgarch:defi}, where the
  generating process of the observations may also depend on the parameter.
\end{enumerate}
The technical proofs are gathered in \autoref{sec:postponed-proofs}.

\section{A general approach to identifiability}
\label{sec:identifiability}
 \subsection{General setting and notation:~partially dominated and partially observed Markov models}
\label{subsec:general:setting}
 Let $(\Xset,\Xsigma)$ and $(\Yset,\Ysigma)$ be two Borel spaces, that is,
 measurable spaces that are isomorphic to a Borel subset of $[0,1]$ and let
 $\Theta$ be a set of parameters. Consider a statistical model determined by a
 class of Markov kernels $\nsequence{K^\theta}{\theta\in\Theta}$ on
 $(\Xset\times \Yset) \times (\Xsigma \otimes \Ysigma)$.  Throughout the paper,
 we denote by $\mathbb{P}_{\xi}^\theta$ the probability (and by
 $\mathbb{E}_{\xi}^\theta$ the corresponding expectation) induced on
 $(\Xset\times\Yset)^{\zsetp}$ by a Markov chain $\dsequence{X}{Y}[k][\zsetp]$ with
 transition kernel $K^\theta$ and initial distribution $\xi$ on
 $\Xset\times\Yset$.  In the case where $\xi$ is a Dirac mass at $(x,y)$ we
 will simply write $\mathbb{P}_{(x,y)}^\theta$.

 For partially observed Markov chains, that is, when only a sample $\chunk Y 1
 n\eqdef(Y_1,\dots,Y_n)\in\Yset^n$ of the second component is observed, it is
 convenient to write $K^\theta$ as
 \begin{align}
 K^\theta((x, y); \rmd x'\rmd y')=Q^\theta((x, y); \rmd x')G^\theta((x, y, x'); \rmd y') \eqsp, \label{eq:def:gen:XY:theta}
\end{align}
where $Q^\theta$ and $G^\theta$ are probability kernels on
$(\Xset\times\Yset)\times\Xsigma$ and on
$(\Xset\times\Yset\times\Xset)\times\Ysigma$, respectively.

We now consider the following general setting.

 \begin{definition}\label{def:partially-dom}
   We say that the Markov model $\nsequence{K^\theta}{\theta\in\Theta}$ of the form~(\ref{eq:def:gen:XY:theta}) is
   partially dominated if there exists a $\sigma$-finite measure $\nu$ on
   $\Yset$ such that for all $(x, y), (x', y')\in \Xset\times\Yset$,
\begin{equation}\label{eq:product-form-kernelK}
G^\theta((x, y, x'); \rmd y')=g^\theta((x,y,x'); y')\nu(\rmd y')\eqsp,
\end{equation}
where the conditional density function $g^\theta$ moreover satisfies
\begin{equation}\label{eq:g-evrywhere-strictly-positive}
g^\theta((x,y,x'); y') > 0,\quad\text{for all}\quad(x, y), (x', y')\in
\Xset\times\Yset \;.
\end{equation}
 \end{definition}
\noindent It follows from~(\ref{eq:product-form-kernelK}) that,  for all
$(x, y)\in\Xset\times\Yset$, $A\in \Xsigma$ and $B\in \Ysigma$,
$$
K^\theta\left((x, y);A\times B\right)=\int_B \kap{y, y'}{x; A}\nu(\rmd
y')\;,
$$
where, for all $y,y' \in \Yset$, $\kap{y,y'}{}$ is a kernel defined on
$(\Xset,\Xsigma)$ by
\begin{equation} \label{eq:kappa}
\kap{y, y'}{x;\rmd x'} \eqdef
 Q^\theta((x,y);\rmd x')g^\theta((x,y,x');y') \; .
\end{equation}
\begin{remark} \label{rem:kappa-is-positive} Note that, in general,
  the kernel $\kappa^{\theta}\langle y,y'\rangle$ is unnormalized  since
  $\kap{y,y'}{x; \Xset}$ may be different from one. Moreover, we have for all
  $(x,y,y') \in \Xset \times \Yset \times \Yset$,
\begin{equation} \label{eq:kappa-is-positive}
\kap{y,y'}{x; \Xset}= \int_{\Xset} Q^\theta((x, y);\rmd
    x')g^\theta((x,y,x');y') >0 \; ,
\end{equation}
where the positiveness follows from the fact that $Q^\theta((x, y);\cdot)$ is a probability on $(\Xset, \Xsigma)$ and Condition~(\ref{eq:g-evrywhere-strictly-positive}).
\end{remark}

In well-specified models, it is assumed that the observations $\chunk{Y}{1}{n}$
are generated from a process $\dsequence{X}{Y}[k][\zsetp]$, which follows the
distribution $\PP^{\thv}_{\xi_\star}$ associated to an unknown parameter
$\thv\in\Theta$ and an unknown initial distribution $\xi_\star$ (usually,
$\xi_\star$ is such that, under $\PP^{\thv}_{\xi_\star}$,
$\sequence{Y}[k][\zsetp]$
is a stationary sequence).  To form a consistent estimate of $\thv$ on the
basis of the observations $\chunk Y1n$ only, \ie, without access to the
hidden process $\sequence{X}[k][\zsetp]$, we define the maximum likelihood estimator
(MLE) $\mlY{\Xinit,n}$ by
$$
\mlY{\Xinit,n} \in \argmax_{\theta \in \Theta} L_{\xi,n}(\theta)
\eqsp,
$$
where $L_{\xi,n}(\theta)$ is the (conditional) log-likelihood function of the
observations under parameter $\theta$ with some arbitrary initial distribution
$\xi$ on $\Xset\times\Yset$, that is,
\begin{align*}
L_{\xi,n}(\theta)&\eqdef
\ln \int \prod_{k=1}^n
Q^\theta((x_{k-1},y_{k-1});\rmd x_k)\,g^{\theta}((x_{k-1},y_{k-1}, x_k);y_k)
\xi (\rmd x_0\rmd y_0) \\
&= \ln\int\kap{y_0,y_1}{}\kap{y_1,y_2}{}\dots\kap{y_{n-1},y_n}{x_0;\Xset}\;
\xi (\rmd x_0\rmd y_0)\;.
\end{align*}
This corresponds to the log of the conditional density of $\chunk Y1n$ given
$(X_0,Y_0)$ with the latter integrated according to $\xi$. In practice $\xi$ is
often taken as a Dirac mass at $(x,y)$ with $x$ arbitrarily chosen and $y$
equal to the observation $Y_0$ when it is available.
In this context, a classical way  (see for
example \cite{leroux:1992})
to prove the consistency of a maximum-likelihood-type
estimator $\mlY{\Xinit,n}$ may be decomposed in the following steps. The first step is to show that
$\mlY{\Xinit,n}$ is, with probability tending to one, in a neighborhood of the
set
\begin{equation}\label{eq:ThetaStar-def}
\Theta_\star \eqdef \argmax_{\theta\in\Theta}\tilde{\PE}^{\thv}\left[\ln p^{\theta,\thv}(Y_1|\chunk{Y}{-\infty}{0})\right]\eqsp.
\end{equation}
This formula involves two quantities that have not yet been defined
since they may require additional assumptions:~first, the expectation
$\tilde{\PE}^{\theta}$, which corresponds to the distribution
$\tilde\PP^{\theta}$ of a sequence $\sequence{Y}[k]$ in accordance with the
kernel $K^\theta$, and second, the density $p^{\theta,\thv}(\cdot|\cdot)$, which shows up when
taking the limit, under $\tilde\PP^{\thv}$, of the $\tilde\PP^{\theta}$-conditional density of $Y_1$ given
its $m$-order past, as $m$ goes to infinity. In many cases, such quantities
appear naturally because the model is ergodic and the normalized
log-likelihood $n^{-1}L_{\xi,n}(\theta)$ can be approximated by
$$
\frac1n \sum_{k=1}^n \ln p^{\theta,\thv}(Y_k|Y_{-\infty:k-1}) \;.
$$
We will provide below some general assumptions, Assumptions~\ref{assum:gen:identif:unique:pi} and~\ref{assum:exist:phi:theta},
that yield precise definitions of $\tilde{\PP}^{\theta}$ and
$p^{\theta,\theta'}(\cdot|\cdot)$.

The second step consists in proving that the set $\Theta_\star$
in~(\ref{eq:ThetaStar-def}) is related to the true parameter $\thv$ in an
exploitable way. Ideally, one could have $\Theta_\star=\{\thv\}$, which would
yield the consistency of $\hat\theta_{\Xinit,n}$ for estimating $\thv$. In this
work, our first objective is to provide a set of general assumptions which ensures that
$\Theta_\star$ is exactly the set of parameters $\theta$ such that
$\tilde\PP^{\theta}=\tilde\PP^{\thv}$. Then this
result guarantees that the estimator converges to the set of parameters
compatible with the true stationary distribution of the observations. If
moreover the model $\nsequence{\tilde\PP^\theta}{\theta\in\Theta}$ is identifiable,
then this set reduces to $\{\thv\}$ and consistency of $\mlY{\Xinit,n}$
directly follows.

To conclude with our general setting, we state the main assumption on the model
and some subsequent notation and definitions used throughout the paper.

\begin{hyp}{K}
\item \label{assum:gen:identif:unique:pi} For all $\theta\in\Theta$,
  the transition kernel $K^\theta$ admits a unique invariant probability
  $\pi^\theta$.
\end{hyp}

We now introduce some important notation used throughout the paper.

\begin{definition}\label{def:equi:theta}
Under Assumption~\ref{assum:gen:identif:unique:pi}, we denote by $\pi_1^\theta$
and $\pi_2^\theta$
the marginal distributions of $\pi^\theta$ on $\Xset$ and $\Yset$, respectively,
and by $\PP^\theta$ and
$\tilde{\mathbb{P}}^\theta$ the probability distributions defined respectively as follows.
\begin{enumerate}[label=\alph*)]
\item $\PP^\theta$ denotes the extension of $\mathbb{P}_{\pi^\theta}^\theta$ on the whole line
$(\Xset\times\Yset)^{\zset}$.
\item $\tilde{\mathbb{P}}^\theta$ is the corresponding projection on the component $\Yset^{\zset}$.
\end{enumerate}
We also use the symbols
$\mathbb{E}^\theta$ and $\tilde{\mathbb{E}}^\theta$ to denote the expectations
corresponding to $\PP^\theta$ and $\tilde{\mathbb{P}}^\theta$, respectively.
Moreover, for all $\theta, \theta'\in\Theta$, we write $\theta\sim\theta'$ if
and only if $\tilde{\mathbb{P}}^{\theta}=\tilde{\mathbb{P}}^{\theta'}$. This defines an
equivalence relation on the parameter set $\Theta$ and the corresponding
equivalence class of $\theta$ is denoted by $[\theta]\eqdef\{\theta' \in\Theta:\; \theta\sim\theta'\}$.
\end{definition}
The equivalence relationship $\sim$ was introduced by \cite{leroux:1992} as an
alternative to the classical identifiability condition.

\subsection{Main result}
\label{subsec:main}
Assumption~\ref{assum:gen:identif:unique:pi} is supposed to hold all along this
section and  $\mathbb{P}^{\theta}$, $\tilde{\mathbb{P}}^{\theta}$ and $\sim$ are
given in \autoref{def:equi:theta}.
Our main result is stated under the following general assumption.

\begin{hyp}{K}
\item \label{assum:exist:phi:theta}
  For all $\theta\neq \theta'$ in $\Theta$, there exists a
  probability kernel $\Phi^{\theta,\theta'}$ on $\Yset^{\zsetn}\times\Xsigma$
  such that for all $A\in\Xsigma$,
$$
\frac{\displaystyle \int_{\Xset}\Phi^{\theta,\theta'}(\chunk{Y}{-\infty}{0};\rmd x_0)\kap{Y_0, Y_1}{x_0; A}}{\displaystyle\int_{\Xset}\Phi^{\theta,\theta'}(\chunk{Y}{-\infty}{0};\rmd x_0)\kap{Y_0, Y_1}{x_0;\Xset}}
=  \Phi^{\theta,\theta'}(\chunk{Y}{-\infty}{1};A), \quad \tilde{\PP}^{\theta'}\as
$$
\end{hyp}
\begin{remark}
Note that from \autoref{rem:kappa-is-positive}, the denominator in the \lhs\
of the last displayed equation is strictly positive, which ensures that the ratio is
well defined.
\end{remark}
\begin{remark} \label{rem:intuition:kernel}
Let us give some insight about the formula appearing in \ref{assum:exist:phi:theta} and explain why it is important to consider the cases $\theta=\theta'$ and $\theta\neq \theta'$ separately.
Since $\Xset$ is a Borel space, \cite[Theorem~6.3]{kallenberg2002} applies and the
conditional distribution of $X_0$ given $\chunk{Y}{-\infty}{0}$ under $\PP^\theta$ defines a
probability kernel denoted by $\Phi^{\theta}$. We prove in \autoref{sec:proof-eq10} of \autoref{sec:postponed-proofs} that this kernel satisfies, for all $A \in \Xsigma$,
\begin{equation} \label{eq:phi:theta:theta}
\frac{\displaystyle \int_{\Xset}\Phi^{\theta}(\chunk{Y}{-\infty}{0};\rmd x_0)\kap{Y_0, Y_1}{x_0; A}}{\displaystyle\int_{\Xset}\Phi^{\theta}(\chunk{Y}{-\infty}{0};\rmd x_0)\kap{Y_0, Y_1}{x_0;\Xset}}
=  \Phi^{\theta}(\chunk{Y}{-\infty}{1};A), \quad \tilde{\PP}^{\theta}\as
\end{equation}
Assumption \ref{assum:exist:phi:theta} asserts that the kernel
$\Phi^{\theta,\theta'}$ satisfies a similar identity $\tilde \PP^{\theta'}\as$
for $\theta' \neq \theta$. It is not necessary at this stage to precise how
$\Phi^{\theta,\theta'}$ shows up. This is done in \autoref{subsec:backward}.
\end{remark}
\begin{remark}
The denominator in the ratio displayed in \ref{assum:exist:phi:theta} can be written as
$p^{\theta,\theta'}(Y_1|\chunk{Y}{-\infty}{0})$, where, for all $y\in\Yset$ and $\chunk{y}{-\infty}{0}\in\Yset^{\zsetn}$,
\begin{equation}
\label{eq:def-p_1}
p^{\theta,\theta'}(y|\chunk{y}{-\infty}{0}) \eqdef
\int_{\Xset}\Phi^{\theta,\theta'}(\chunk{y}{-\infty}{0};\rmd x_0)\kap{y_0,
  y}{x_0;\Xset}
\end{equation}
is a conditional density with respect to the measure $\nu$, since for all $(x,y)\in\Xset\times\Yset$, $\int\kap{y,y'}{x;\Xset}\nu(\rmd y')=1$.
\end{remark}

Since $\Yset$ is a Borel space,
\cite[Theorem~6.3]{kallenberg2002} applies and the
conditional distribution of $\chunk{Y}{1}{n}$ given $\chunk{Y}{-\infty}{0}$ defines a
probability kernel. Since $\tilde \PP^\theta(\chunk{Y}{1}{n}\in\cdot)$ is dominated by
$\nu^{\otimes n}$, this
in turns defines a conditional density with respect to $\nu^{\otimes n}$, which we denote by
$p_n^{\theta}(\cdot|\cdot)$, so that for all $B\in\Ysigma^{\otimes n}$,
\begin{equation}\label{eq:p-theta-density}
\tilde \PP^\theta\left(\chunk{Y}{1}{n}\in B\,\vert\,\chunk{Y}{-\infty}{0}\right)=\int_B
p_n^\theta(\chunk{y}{1}{n}|\chunk{Y}{-\infty}{0})\;\nu(\rmd y_1)\cdots\nu(\rmd y_n), \quad\tilde\PP^\theta\as
\end{equation}

Let us now state the main result.
\begin{theorem}\label{thm:mainresult}
Assume that~\ref{assum:gen:identif:unique:pi} holds and define
$\PP^{\theta}$, $\tilde{\PP}^{\theta}$ and $[\theta]$ as in
\autoref{def:equi:theta}. Suppose that Assumption~\ref{assum:exist:phi:theta}
holds.
For all
  $\theta,\theta'\in\Theta$, define $p^{\theta,\theta'}(Y_1|\chunk{Y}{-\infty}{0})$ by~(\ref{eq:def-p_1})
  if $\theta\neq\theta'$ and by $p^{\theta,\theta}(Y_1|\chunk{Y}{-\infty}{0})=p_1^\theta(Y_1|\chunk{Y}{-\infty}{0})$ as
  in~(\ref{eq:p-theta-density}) otherwise.  Then for all $\thv\in\Theta$, we have
\begin{equation}\label{eq:ThetaStar-equiv-class}
\argmax_{\theta\in\Theta}\tilde{\PE}^{\thv}\left[\ln
  p^{\theta,\thv}(Y_1|\chunk{Y}{-\infty}{0})\right] = [\thv]\;.
\end{equation}
\end{theorem}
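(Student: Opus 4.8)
The plan is to establish the two inclusions in~\eqref{eq:ThetaStar-equiv-class}. The first ingredient is a conditional information inequality. For every $\theta$, integrating~\eqref{eq:def-p_1} and using $\int \kap{y,y'}{x}{\Xset}\,\nu(\rmd y')=1$ together with the fact that $\Phi^{\theta,\thv}(\chunk{Y}{-\infty}{0};\cdot)$ is a probability measure shows that $y'\mapsto p^{\theta,\thv}(y'\,|\,\chunk{Y}{-\infty}{0})$ is a probability density with respect to $\nu$, and by~\eqref{eq:kappa-is-positive} it is everywhere positive; the same is true of $p^{\thv,\thv}(\cdot\,|\,\chunk{Y}{-\infty}{0})=p_1^{\thv}(\cdot\,|\,\chunk{Y}{-\infty}{0})$, a version of which is $\int_{\Xset}\Phi^{\thv}(\chunk{Y}{-\infty}{0};\rmd x_0)\,\kap[\thv]{Y_0,\cdot}{x_0;\Xset}\,\nu(\cdot)$. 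Conditioning on $\chunk{Y}{-\infty}{0}$, for which the conditional law of $Y_1$ under $\tilde{\PP}^{\thv}$ is $p^{\thv,\thv}(\cdot\,|\,\chunk{Y}{-\infty}{0})\,\nu$, and applying Jensen's inequality to the logarithm gives
\[
\tilde{\PE}^{\thv}\!\left[\ln\frac{p^{\theta,\thv}(Y_1\,|\,\chunk{Y}{-\infty}{0})}{p^{\thv,\thv}(Y_1\,|\,\chunk{Y}{-\infty}{0})}\,\middle|\,\chunk{Y}{-\infty}{0}\right]\le\ln\int p^{\theta,\thv}(y'\,|\,\chunk{Y}{-\infty}{0})\,\nu(\rmd y')=0 ,
\]
with equality if and only if the ratio inside is $\tilde{\PP}^{\thv}\as$ equal to a $\sigma(Y_j:j\le0)$-measurable random variable, hence — its conditional expectation being $1$ — equal to $1$. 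The integrated left-hand side is the nonnegative quantity $\tilde{\PE}^{\thv}[\ln p^{\thv,\thv}(Y_1\,|\,\chunk{Y}{-\infty}{0})]-\tilde{\PE}^{\thv}[\ln p^{\theta,\thv}(Y_1\,|\,\chunk{Y}{-\infty}{0})]$, well defined in $[0,\infty]$ since $\tilde{\PE}^{\thv}[(\ln p^{\theta,\thv}/p^{\thv,\thv})^+]\le\tilde{\PE}^{\thv}[p^{\theta,\thv}/p^{\thv,\thv}]=1$; therefore $\thv$ is a maximizer and the left-hand side of~\eqref{eq:ThetaStar-equiv-class} equals $\mathcal{E}\eqdef\{\theta\in\Theta:\ p^{\theta,\thv}(Y_1\,|\,\chunk{Y}{-\infty}{0})=p^{\thv,\thv}(Y_1\,|\,\chunk{Y}{-\infty}{0})\ \ \tilde{\PP}^{\thv}\as\}$. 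Everything then reduces to proving $\mathcal{E}=[\thv]$.

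For $\mathcal{E}\subseteq[\thv]$, fix $\theta\in\mathcal{E}$. Iterating the identity of Assumption~\ref{assum:exist:phi:theta} along the stationary sequence $(Y_k)_{k\in\zset}$ under $\tilde{\PP}^{\thv}$ and telescoping, one obtains $\tilde{\PP}^{\thv}\as$, for all $n\ge1$,
\[
\int_{\Xset}\Phi^{\theta,\thv}(\chunk{Y}{-\infty}{0};\rmd x_0)\,\kap{Y_0,Y_1}{}\dots\kap{Y_{n-1},Y_n}{x_0;\Xset}=\prod_{k=1}^n p^{\theta,\thv}(Y_k\,|\,\chunk{Y}{-\infty}{k-1})=p_n^{\thv}(\chunk{Y}{1}{n}\,|\,\chunk{Y}{-\infty}{0}),
\]
the last step using $\theta\in\mathcal{E}$, stationarity, and the chain rule for the conditional densities of $\tilde{\PP}^{\thv}$. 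Now define a probability $\mathbb{Q}$ on a space carrying $(Y_k)_{k\le0}$, $X_0$ and $(X_k,Y_k)_{k\ge1}$ by: let $(Y_k)_{k\le0}$ be distributed as under $\tilde{\PP}^{\thv}$; given it, draw $X_0\sim\Phi^{\theta,\thv}(\chunk{Y}{-\infty}{0};\cdot)$; then let $(X_k,Y_k)_{k\ge1}$ follow the Markov kernel $K^{\theta}$ started from $(X_0,Y_0)$. The displayed identity says that, under $\mathbb{Q}$, the conditional law of $\chunk{Y}{1}{n}$ given $\chunk{Y}{-\infty}{0}$ coincides with that of $\tilde{\PP}^{\thv}$, so $(Y_k)_{k\in\zset}$ has law $\tilde{\PP}^{\thv}$ under $\mathbb{Q}$. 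On the other hand, using the filtering identity of~\ref{assum:exist:phi:theta} at each forward step, one checks that under $\mathbb{Q}$ the conditional law of $X_k$ given $\chunk{Y}{-\infty}{k}$ is $\Phi^{\theta,\thv}(\chunk{Y}{-\infty}{k};\cdot)$ for all $k\ge0$, and then that $(X_k,Y_k)_{k\ge1}$ is a stationary Markov chain with kernel $K^{\theta}$; its one-dimensional marginal is thus an invariant probability of $K^{\theta}$, hence equals $\pi^{\theta}$ by Assumption~\ref{assum:gen:identif:unique:pi}, so $(Y_k)_{k\ge1}$ has under $\mathbb{Q}$ the law it has under $\tilde{\PP}^{\theta}$. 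Comparing the two descriptions and using that $\tilde{\PP}^{\theta}$ and $\tilde{\PP}^{\thv}$ are stationary yields $\tilde{\PP}^{\theta}=\tilde{\PP}^{\thv}$, i.e. $\theta\in[\thv]$.

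For $[\thv]\subseteq\mathcal{E}$, fix $\theta\sim\thv$, so $\tilde{\PP}^{\theta}=\tilde{\PP}^{\thv}$. Then both the true filtering kernel $\Phi^{\theta}$ — which satisfies~\eqref{eq:phi:theta:theta} $\tilde{\PP}^{\theta}\as$ — and $\Phi^{\theta,\thv}$, which by~\ref{assum:exist:phi:theta} satisfies the same recursion $\tilde{\PP}^{\thv}\as=\tilde{\PP}^{\theta}\as$, solve the filtering recursion driven by $\tilde{\PP}^{\theta}$, and a version of the true predictive density is $\int_{\Xset}\Phi^{\theta}(\chunk{Y}{-\infty}{0};\rmd x_0)\,\kap{Y_0,\cdot}{x_0;\Xset}\,\nu(\cdot)=p_1^{\theta}(\cdot\,|\,\chunk{Y}{-\infty}{0})=p_1^{\thv}(\cdot\,|\,\chunk{Y}{-\infty}{0})$. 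It therefore suffices to show that any probability kernel solving the filtering recursion $\tilde{\PP}^{\theta}\as$ yields this same predictive density $\tilde{\PP}^{\theta}\as$. Telescoping as before, $M_n\eqdef\prod_{k=1}^n p^{\theta,\thv}(Y_k\,|\,\chunk{Y}{-\infty}{k-1})/p^{\thv,\thv}(Y_k\,|\,\chunk{Y}{-\infty}{k-1})$ is a nonnegative martingale for $(\mathcal{F}_n)_{n\ge0}$, $\mathcal{F}_n=\sigma(Y_j:j\le n)$, under $\tilde{\PP}^{\thv}$, with $\tilde{\PE}^{\thv}[M_n]=1$, so $M_n\to M_\infty$ $\tilde{\PP}^{\thv}\as$; the event $\{M_\infty=0\}$ is shift-invariant, hence $\tilde{\PP}^{\thv}$-trivial because Assumption~\ref{assum:gen:identif:unique:pi} makes $\tilde{\PP}^{\thv}$ ergodic. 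Ruling out $M_\infty=0$ $\tilde{\PP}^{\thv}\as$ — this is the delicate point: one controls $1/M_n$, a nonnegative submartingale under $\tilde{\PP}^{\theta}$, using the everywhere-positivity~\eqref{eq:g-evrywhere-strictly-positive} and the identity from the previous step, which writes $\tilde{\PP}^{\theta}(\chunk{Y}{1}{n}\in\cdot)$ as the $\tilde{\PP}^{\theta}$-average of $\int\Phi^{\theta,\thv}(\chunk{Y}{-\infty}{0};\rmd x_0)\,\kap{Y_0,Y_1}{}\dots\kap{Y_{n-1},Y_n}{x_0;\Xset}\,\nu^{\otimes n}(\cdot)$ — and this gives $M_\infty\in(0,\infty)$ $\tilde{\PP}^{\thv}\as$, whence $n^{-1}\ln M_n\to0$. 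By Birkhoff's theorem this limit also equals $\tilde{\PE}^{\thv}[\ln(p^{\theta,\thv}/p^{\thv,\thv})(Y_1\,|\,\chunk{Y}{-\infty}{0})]$, so this relative entropy vanishes and, by the equality case above, $\theta\in\mathcal{E}$. The heart of the argument, and its main difficulty, is precisely the identification in the last two steps of the filter-driven process with the genuine $\pi^{\theta}$-stationary chain: the filtering recursion by itself does not determine its solution (no filter stability is assumed), and it is the uniqueness of $\pi^{\theta}$ in Assumption~\ref{assum:gen:identif:unique:pi} — forcing $(X_k,Y_k)_{k\ge1}$ to be $\pi^{\theta}$-stationary and yielding ergodicity of $\tilde{\PP}^{\theta}$ — that closes both inclusions; keeping the martingale limit away from $0$ is the step requiring the most care.
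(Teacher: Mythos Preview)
Your handling of the inclusion $\mathcal{E}\subseteq[\thv]$ is essentially the paper's proof, only phrased via the explicit coupling $\mathbb{Q}$ instead of defining $\pi^{\theta,\thv}(A\times B)\eqdef\tilde{\PE}^{\thv}[\Phi^{\theta,\thv}(\chunk{Y}{-\infty}{0};A)\1_B(Y_0)]$ directly and checking $\pi^{\theta,\thv}K^{\theta}=\pi^{\theta,\thv}$. That part is fine; your identification of $\Theta_\star$ with the equality set $\mathcal{E}$ of the Kullback--Leibler inequality is also the right starting point.

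The genuine gap is in the reverse inclusion $[\thv]\subseteq\mathcal{E}$. Your martingale route does not close. You correctly note that $M_n$ is a nonnegative $(\mathcal{F}_n)$-martingale with mean $1$ under $\tilde{\PP}^{\thv}$ and that $\{M_\infty=0\}$ is shift-invariant, but nothing you have written excludes $\tilde{\PP}^{\thv}(M_\infty=0)=1$. Saying ``$1/M_n$ is a submartingale'' gives inequalities in the wrong direction (Jensen yields a \emph{lower} bound on $\tilde{\PE}^{\thv}[1/M_n]$, not an upper one), and the ``identity from the previous step'' you invoke was derived under the hypothesis $\theta\in\mathcal{E}$, precisely what is to be proved here, so this is circular. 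More fundamentally, the filtering recursion in~\ref{assum:exist:phi:theta} does \emph{not} determine its solution uniquely, and your attempt to force the predictive density built from $\Phi^{\theta,\thv}$ to coincide with that built from the genuine filter $\Phi^{\theta}$ is a filter-stability statement that is not available under \ref{assum:gen:identif:unique:pi}--\ref{assum:exist:phi:theta} alone. Finally, applying Birkhoff presupposes integrability of $\ln\bigl(p^{\theta,\thv}/p^{\thv,\thv}\bigr)(Y_1\,|\,\chunk{Y}{-\infty}{0})$, which you have not checked.

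The paper avoids all of this. For $[\thv]\subseteq\Theta_\star$ it isolates a one-line lemma: if a family of $\sigma(\chunk{Y}{-\infty}{\infty})$-measurable functionals $G(\theta)$ satisfies $\sup_{\theta}\tilde{\PE}^{\thv'}[G(\theta)]=\tilde{\PE}^{\thv'}[G(\thv')]$ for every $\thv'\in\Theta$, then for $\theta'\sim\thv$ one has $\tilde{\PE}^{\thv}[G(\theta')]=\tilde{\PE}^{\theta'}[G(\theta')]=\sup_{\theta}\tilde{\PE}^{\theta'}[G(\theta)]=\sup_{\theta}\tilde{\PE}^{\thv}[G(\theta)]$, using only $\tilde{\PP}^{\theta'}=\tilde{\PP}^{\thv}$ and the hypothesis at $\theta'$. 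No martingale, no ergodicity, no control of $M_\infty$; the uniqueness of $\pi^{\theta}$ enters only in the harder inclusion that you already handled.
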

Before proving \autoref{thm:mainresult}, we  first extend the
definition of the conditional density on $\Yset$ in~(\ref{eq:def-p_1}) to a
conditional density on $\Yset^n$.
\begin{definition}\label{def:cond-densities-given-past}
For every positive integer $n$ and $\theta\neq\theta'\in\Theta$, define the function
$p_n^{\theta,\theta'}(\cdot|\cdot)$ on $\Yset^n\times\Yset^{\zsetn}$ by
\begin{equation}
\label{eq:def-p_n}
p_n^{\theta,\theta'}(\chunk{y}{1}{n}|\chunk{y}{-\infty}{0})\eqdef\int_{\Xset^n} \Phi^{\theta,\theta'}(\chunk{y}{-\infty}{0};\rmd x_0)\prod_{k=0}^{n-1}\kap{y_{k}, y_{k+1}}{x_{k}; \rmd x_{k+1}}.
\end{equation}
\end{definition}
Again, it is easy to check that each
$p_n^{\theta,\theta'}(\,\cdot\,|\chunk{y}{-\infty}{0})$ is indeed a density on
$\Yset^n$.  Assumption~\ref{assum:exist:phi:theta} ensures that these density
functions moreover satisfy the successive conditional formula, as for
conditional densities, provided that we restrict ourselves to sequences in a
set of $\tilde{\PP}^{\theta'}$-probability one, as stated in the following lemma.
\begin{lemma}\label{lem:imp:p;seq}
  Suppose that Assumption ~\ref{assum:exist:phi:theta} holds and let
  $p_n^{\theta,\theta'}(\cdot|\cdot)$ be as defined
  in~\autoref{def:cond-densities-given-past}. Then for all $\theta,\theta' \in
  \Theta$ and $n\ge 2$, we have
    \begin{equation}\label{eq:sub:lem:p}
      p_n^{\theta,\theta'} (\chunk{Y}{1}{n}|\chunk{Y}{-\infty}{0})=p_1^{\theta,\theta'} (Y_{n}|\chunk{Y}{-\infty}{n-1})p_{n-1}^{\theta,\theta'} (\chunk{Y}{1}{n-1}|\chunk{Y}{-\infty}{0}), \quad \tilde{\PP}^{\theta'}\as
    \end{equation}
  \end{lemma}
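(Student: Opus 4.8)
The plan is to prove, by induction on $n\ge 1$, the stronger ``filtering'' identity that, $\tilde{\PP}^{\theta'}\as$, for every $A\in\Xsigma$,
\[
\int_{\Xset^n} \Phi^{\theta,\theta'}(\chunk{Y}{-\infty}{0};\rmd x_0)\prod_{k=0}^{n-1}\kap{Y_{k}, Y_{k+1}}{x_{k}; \rmd x_{k+1}}\1_A(x_n)
= p_n^{\theta,\theta'}(\chunk{Y}{1}{n}|\chunk{Y}{-\infty}{0})\;\Phi^{\theta,\theta'}(\chunk{Y}{-\infty}{n};A)\;,
\]
with the convention that, when $\theta=\theta'$, $\Phi^{\theta,\theta}$ denotes the kernel $\Phi^{\theta}$ of \autoref{rem:intuition:kernel} (so that the case $n=1$ below reads off \eqref{eq:phi:theta:theta}) and $p_n^{\theta,\theta}$ is given by \eqref{eq:def-p_n} with $\Phi^\theta$ in place of $\Phi^{\theta,\theta'}$. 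Equivalently, introducing the finite unnormalized measure $\mu_n(A):=\int_{\Xset^n}\Phi^{\theta,\theta'}(\chunk{Y}{-\infty}{0};\rmd x_0)\prod_{k=0}^{n-1}\kap{Y_k,Y_{k+1}}{x_k;\rmd x_{k+1}}\1_A(x_n)$ on $(\Xset,\Xsigma)$, this asserts that $\mu_n=p_n^{\theta,\theta'}(\chunk{Y}{1}{n}|\chunk{Y}{-\infty}{0})\,\Phi^{\theta,\theta'}(\chunk{Y}{-\infty}{n};\cdot)$, $\tilde{\PP}^{\theta'}\as$. Granting this, \eqref{eq:sub:lem:p} follows by taking $A=\Xset$ at rank $n$ and inserting the decomposition of $\mu_{n-1}$ obtained at rank $n-1$.

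For the base case $n=1$, Assumption~\ref{assum:exist:phi:theta} (resp.\ \eqref{eq:phi:theta:theta} when $\theta=\theta'$) yields the identity once both sides of the ratio displayed there are multiplied by its denominator, which equals $p_1^{\theta,\theta'}(Y_1|\chunk{Y}{-\infty}{0})$ by \eqref{eq:def-p_1} and is positive by \autoref{rem:kappa-is-positive}. For the induction step, assume the identity at rank $n$. Integrating the rank-$(n+1)$ left-hand side first over $x_0,\dots,x_{n-1}$ (all integrands are nonnegative, $\Phi^{\theta,\theta'}$ being a probability kernel and $\kappa^{\theta}\langle\cdot,\cdot\rangle$ a nonnegative kernel, so Tonelli applies) rewrites it as $\int_{\Xset}\mu_n(\rmd x_n)\,\kap{Y_n,Y_{n+1}}{x_n;A}$; plugging in the induction hypothesis for $\mu_n$ and then the time-$n$ shift of Assumption~\ref{assum:exist:phi:theta} turns this into $p_n^{\theta,\theta'}(\chunk{Y}{1}{n}|\chunk{Y}{-\infty}{0})\,p_1^{\theta,\theta'}(Y_{n+1}|\chunk{Y}{-\infty}{n})\,\Phi^{\theta,\theta'}(\chunk{Y}{-\infty}{n+1};A)$, using also the time-$n$ shift of \eqref{eq:def-p_1} to identify $\int_{\Xset}\Phi^{\theta,\theta'}(\chunk{Y}{-\infty}{n};\rmd x_n)\kap{Y_n,Y_{n+1}}{x_n;\Xset}$ with $p_1^{\theta,\theta'}(Y_{n+1}|\chunk{Y}{-\infty}{n})$. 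Setting $A=\Xset$ and using $\Phi^{\theta,\theta'}(\chunk{Y}{-\infty}{n+1};\Xset)=1$ together with \eqref{eq:def-p_n} identifies the product of the first two factors as $p_{n+1}^{\theta,\theta'}(\chunk{Y}{1}{n+1}|\chunk{Y}{-\infty}{0})$; this is \eqref{eq:sub:lem:p} at rank $n+1$ and, substituted back, the filtering identity at rank $n+1$.

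Two points need care, and I expect the first to be the only genuine subtlety. Assumption~\ref{assum:exist:phi:theta} is stated only ``at time $0$'', whereas the argument uses its time-$n$ shift (and that of \eqref{eq:def-p_1}); this is legitimate because under $\tilde{\PP}^{\theta'}$ the sequence $(Y_k)_{k\in\zset}$ is stationary — by \autoref{def:equi:theta} it is the projection onto $\Yset^{\zset}$ of $\PP^{\theta'}$, the stationary extension of $\mathbb{P}_{\pi^{\theta'}}^{\theta'}$ — so the coordinate shift preserves $\tilde{\PP}^{\theta'}$ and each shifted statement again holds $\tilde{\PP}^{\theta'}\as$; only finitely many shifts intervene for a fixed $n$, so the exceptional null set is harmless. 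Secondly, to manipulate $\mu_n$ as a bona fide measure and to integrate $\kappa^{\theta}$ against $\Phi^{\theta,\theta'}(\chunk{Y}{-\infty}{n+1};\cdot)$, one needs the identity of Assumption~\ref{assum:exist:phi:theta} to hold for all $A\in\Xsigma$ on one common $\tilde{\PP}^{\theta'}$-full set; since $(\Xset,\Xsigma)$ is a Borel space, $\Xsigma$ is countably generated, and a monotone-class argument upgrades the ``for each $A$'' formulation to an identity of (sub-probability) kernels outside a single null set. Apart from these bookkeeping steps and the routine Tonelli applications, the induction is a straightforward propagation-of-the-filter computation.
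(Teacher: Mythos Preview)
Your proof is correct and follows essentially the same approach as the paper's: both rely on Assumption~\ref{assum:exist:phi:theta} together with the shift-invariance of $\tilde{\PP}^{\theta'}$ to propagate the one-step filter update into the $n$-step density factorization. The only structural difference is that you carry along the stronger filtering identity and peel off the \emph{last} factor (invoking the time-$n$ shift of \ref{assum:exist:phi:theta}), whereas the paper first reformulates \eqref{eq:sub:lem:p} as $p_n^{\theta,\theta'}(\chunk{Y}{1}{n}|\chunk{Y}{-\infty}{0})=p_{n-1}^{\theta,\theta'}(\chunk{Y}{2}{n}|\chunk{Y}{-\infty}{1})\,p_1^{\theta,\theta'}(Y_1|\chunk{Y}{-\infty}{0})$ via shift-invariance and then peels off the \emph{first} factor, applying \ref{assum:exist:phi:theta} only at time $0$; the two are mirror images of one another.
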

The proof of this lemma is postponed to \autoref{sec:proof-lem:imp}  in
\autoref{sec:postponed-proofs}.
We now have all the tools for proving the main result.
\begin{proof}[Proof of \autoref{thm:mainresult}]
\label{sec:proof-main}
Within this proof section, we will drop the subscript $n$ and respectively write
$p^{\theta,\theta'}(\chunk{y}{1}{n}|\chunk{y}{-\infty}{0})$ and
$p^{\theta}(\chunk{y}{1}{n}|\chunk{y}{-\infty}{0})$  instead of $p^{\theta,\theta'}_n(\chunk{y}{1}{n}|\chunk{y}{-\infty}{0})$  and
$p_n^{\theta}(\chunk{y}{1}{n}|\chunk{y}{-\infty}{0})$ when no
ambiguity occurs.

For all $\theta \in \Theta$, we have by conditioning on $\chunk{Y}{-\infty}{0}$ and by using~(\ref{eq:p-theta-density}),
\begin{align}
&\tilde{\PE}^{\thv}\left[\ln p^{\thv}(Y_1|\chunk{Y}{-\infty}{0})\right]-\tilde{\PE}^{\thv}\left[\ln p^{\theta,\thv}(Y_1|\chunk{Y}{-\infty}{0})\right] \nonumber \\
&\quad=\tilde{\PE}^{\thv}\left[\tilde{\PE}^{\thv}\left[\left. \ln \frac{p^{\thv}(Y_1|\chunk{Y}{-\infty}{0})}{p^{\theta,\thv}(Y_1|\chunk{Y}{-\infty}{0})}\right|\chunk{Y}{-\infty}{0}\right]\right]
\nonumber \\
&\quad=\tilde\PE^{\thv}\left[\mathrm{KL}\left(p_1^{\thv}(\,\cdot\,|\chunk{Y}{-\infty}{0})
\big{\|}p_1^{\theta,\thv}(\,\cdot\,|\chunk{Y}{-\infty}{0})\right)\right]\eqsp, \label{eq:kullback}
\end{align}
where $\mathrm{KL}(p\|q)$ denotes the Kullback-Leibler divergence between the
densities $p$ and $q$. The nonnegativity of the Kullback-Leibler divergence
shows that $\thv$ belongs to the maximizing set on the left-hand side
of~(\ref{eq:ThetaStar-equiv-class}). This implies
\begin{equation}
  \label{eq:ThetaStar-containing}
  \argmax_{\theta\in\Theta}\tilde{\PE}^{\thv}\left[\ln
  p^{\theta,\thv}(Y_1|\chunk{Y}{-\infty}{0})\right] \supseteq [\thv]\;,
\end{equation}
where we have used the following lemma.
\begin{lemma}
  \label{lem:trivial}
Assume that~\ref{assum:gen:identif:unique:pi} holds and define
$\tilde{\PE}^{\theta}$ and $[\theta]$ as in \autoref{def:equi:theta}.
Suppose that for all $\theta\in\Theta$, $G(\theta)$ is a
$\sigma(\chunk{Y}{-\infty}{\infty})$-measurable random variable such that, for
all $\thv\in\Theta$,
$$
\sup_{\theta\in\Theta}\tilde{\PE}^{\thv}\left[G(\theta)\right]=\tilde{\PE}^{\thv}\left[G(\thv)\right] \;.
$$
Then for all $\thv\in\Theta$ and $\theta'\in[\thv]$, we have
$$
\tilde{\PE}^{\thv}\left[G(\theta')\right] = \sup_{\theta\in\Theta}\tilde{\PE}^{\thv}\left[G(\theta)\right] \;.
$$
\end{lemma}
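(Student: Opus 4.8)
The plan is to use nothing more than the very definition of the equivalence class $[\thv]$ together with the hypothesis applied at a different base point. By \autoref{def:equi:theta}, the relation $\theta'\in[\thv]$ means precisely that $\tilde{\PP}^{\theta'}=\tilde{\PP}^{\thv}$ as probability measures on $\Yset^{\zset}$, hence that they agree on the $\sigma$-field $\sigma(\chunk{Y}{-\infty}{\infty})$. Since each $G(\theta)$ is, by assumption, $\sigma(\chunk{Y}{-\infty}{\infty})$-measurable, its expectation under $\tilde{\PP}^{\thv}$ is a functional of the law of the bi-infinite sequence $\chunk{Y}{-\infty}{\infty}$ only; therefore
\begin{equation*}
\tilde{\PE}^{\thv}\left[G(\theta)\right]=\tilde{\PE}^{\theta'}\left[G(\theta)\right]\qquad\text{for every }\theta\in\Theta,
\end{equation*}
this identity being valid in $[-\infty,+\infty]$, so that no integrability subtlety arises. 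In particular the two functions $\theta\mapsto\tilde{\PE}^{\thv}[G(\theta)]$ and $\theta\mapsto\tilde{\PE}^{\theta'}[G(\theta)]$ on $\Theta$ coincide and hence have the same supremum.

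Next I would invoke the standing hypothesis of the lemma with $\theta'$ in the role of $\thv$ (legitimate since $\theta'\in\Theta$), which yields $\sup_{\theta\in\Theta}\tilde{\PE}^{\theta'}[G(\theta)]=\tilde{\PE}^{\theta'}[G(\theta')]$. Combining this with the identity of the previous paragraph gives
\begin{equation*}
\sup_{\theta\in\Theta}\tilde{\PE}^{\thv}\left[G(\theta)\right]=\sup_{\theta\in\Theta}\tilde{\PE}^{\theta'}\left[G(\theta)\right]=\tilde{\PE}^{\theta'}\left[G(\theta')\right]=\tilde{\PE}^{\thv}\left[G(\theta')\right],
\end{equation*}
where the last equality once more uses that $G(\theta')$ is $\sigma(\chunk{Y}{-\infty}{\infty})$-measurable and $\tilde{\PP}^{\theta'}=\tilde{\PP}^{\thv}$. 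This is exactly the claimed equality.

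There is no real obstacle here; the statement is a one-line consequence of the definition of $\sim$. Assumption~\ref{assum:gen:identif:unique:pi} enters only to make $\tilde{\PP}^{\theta}$, $\tilde{\PE}^{\theta}$ and the relation $\sim$ well defined in the first place, via \autoref{def:equi:theta}. The single point deserving a word of care is that the equality of expectations under $\tilde{\PP}^{\thv}$ and $\tilde{\PP}^{\theta'}$ rests solely on the equality of the laws on $\sigma(\chunk{Y}{-\infty}{\infty})$ — which is exactly the $\sigma$-field with respect to which the $G(\theta)$'s are postulated to be measurable — so that no assumption on the (possibly larger) underlying probability space, nor on the joint behaviour of $X$ and $Y$, is needed.
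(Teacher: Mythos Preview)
Your proof is correct and follows exactly the same approach as the paper's: both use $\tilde{\PP}^{\thv}=\tilde{\PP}^{\theta'}$ to get $\tilde{\PE}^{\thv}[G(\theta)]=\tilde{\PE}^{\theta'}[G(\theta)]$ for all $\theta$, then apply the hypothesis at $\theta'$ and chain the resulting equalities. The paper compresses this into a single displayed line, but the logic is identical.
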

\begin{proof}
  Take $\thv\in\Theta$ and $\theta'\in[\thv]$. Then we have, for all $\theta\in\Theta$,
  $\tilde{\PE}^{\thv}\left[G(\theta)\right]=\tilde{\PE}^{\theta'}\left[G(\theta)\right]$,
  and it follows that
$$
\tilde{\PE}^{\thv}\left[G(\theta')\right]
=\tilde{\PE}^{\theta'}\left[G(\theta')\right] =
\sup_{\theta\in\Theta}\tilde{\PE}^{\theta'}\left[G(\theta)\right] =
\sup_{\theta\in\Theta}\tilde{\PE}^{\thv}\left[G(\theta)\right] \;,
$$
which concludes the proof.
\end{proof}
The proof of the reverse inclusion of~(\ref{eq:ThetaStar-containing}) is more
tricky. Let
us take $\theta\in\Theta_\star$ such that $\theta\neq\thv$ and
show that it implies $\theta\sim\thv$. By \eqref{eq:kullback} we have
$$
\tilde\PE^{\thv}\left[\mathrm{KL}\left(p_1^{\thv}(\,\cdot\,|\chunk{Y}{-\infty}{0})
\big{\|}p_1^{\theta,\thv}(\,\cdot\,|\chunk{Y}{-\infty}{0})\right)\right]=0\eqsp.
$$
 Consequently,
\begin{equation*}
p^{\thv}(Y_1|\chunk{Y}{-\infty}{0}) =
p^{\theta,\thv}(Y_1|\chunk{Y}{-\infty}{0}),\quad \tilde\PP^{\thv}\as
\end{equation*}
Applying ~\autoref{lem:imp:p;seq} and using that $\tilde\PP^{\thv}$ is
shift-invariant, this relation propagates to all $n\geq2$, so that
\begin{equation}\label{eq:eq-ptheta-pthetastar}
p^{\thv}\left(\chunk{Y}{1}{n}\vert \chunk{Y}{-\infty}{0}\right)=p^{\theta,\thv}\left(\chunk{Y}{1}{n}\vert \chunk{Y}{-\infty}{0}\right), \quad \tilde{\PP}^{\thv}\as
\end{equation}
For any measurable function $H:\Yset^n\to\rset_+$, we get
\begin{align*}
\tilde\PE^{\thv}\left[H(\chunk{Y}{1}{n})\right]
&=\tilde\PE^{\thv}\left\{\tilde\PE^{\thv}\left[H(\chunk{Y}{1}{n})\frac{p^{\theta,\thv}(\chunk{Y}{1}{n}\vert \chunk{Y}{-\infty}{0})}{p^{\thv}(\chunk{Y}{1}{n}\vert \chunk{Y}{-\infty}{0})}\middle | \chunk{Y}{-\infty}{0}\right]\right\} \\
&=\tilde\PE^{\thv}\left[\int H(\chunk{y}{1}{n})p^{\theta,\thv}(\chunk{y}{1}{n}|\chunk{Y}{-\infty}{0})\nu^{\otimes n}(\rmd \chunk{y}{1}{n})\right]\eqsp,
\end{align*}
where the last equality follows from~(\ref{eq:p-theta-density}).
Using~\autoref{def:cond-densities-given-past} and Tonelli's theorem, we obtain
\begin{align*}
\tilde\PE^{\thv}\left[H(\chunk{Y}{1}{n})\right]
&=\tilde\PE^{\thv}\int H(\chunk{y}{1}{n})\int \Phi^{\theta,\thv}(\chunk{Y}{-\infty}{0};\rmd x_0)\kap{Y_0, y_1}{x_0; \rmd x_{1}}\times\\
&\quad\quad\prod_{k=1}^{n-1}\kap{y_{k}, y_{k+1}}{x_{k}; \rmd x_{k+1}}\nu^{\otimes n}(\rmd \chunk{y}{1}{n})\eqsp, \\
&=\tilde\PE^{\thv}\int \Phi^{\theta,\thv}(\chunk{Y}{-\infty}{0};\rmd x_0)\int H(\chunk{y}{1}{n})\kap{Y_0, y_1}{x_0; \rmd x_{1}}\times\\
&\quad\quad \prod_{k=1}^{n-1}\kap{y_{k}, y_{k+1}}{x_{k}; \rmd x_{k+1}})\nu^{\otimes n}(\rmd \chunk{y}{1}{n})\eqsp, \\
&= \tilde\PE^{\thv}\int\Phi^{\theta,\thv}(\chunk{Y}{-\infty}{0};\rmd x_0)\PE^{\theta}_{(x_0, Y_0)}\left[H(\chunk{Y}{1}{n})\right]\eqsp,\\
&=\PE^{\theta}_{\pi^{\theta, \thv}}\left[H(\chunk{Y}{1}{n})\right]\eqsp,
\end{align*}
where $\pi^{\theta, \thv}$ is a probability on $\Xset\times\Yset$
defined by
\begin{equation*}
\pi^{\theta, \thv}(A\times B)\eqdef\tilde\PE^{\thv}\left[\Phi^{\theta,\thv}(\chunk{Y}{-\infty}{0};A)\1_B(Y_0)\right]\eqsp,
\end{equation*}
for all $(A, B)\in\Xsigma\times\Ysigma$. Consequently, for all
$B\in\Ysigma^{\otimes\zsetpnz}$,
\begin{equation}\label{eq:eq-ptheta-pthetastar1}
\tilde\PP^{\thv}(\Yset^{\zsetn}\times B)=\PP^{\theta}_{\pi^{\theta,
    \thv}}(\Xset^{\zsetp}\times (\Yset\times B))\eqsp.
\end{equation}
If we had $\pi^{\theta}=\pi^{\theta, \thv}$, then we could conclude that the
two shift-invariant distributions $\tilde\PP^{\thv}$ and $\tilde\PP^{\theta}$
are the same and thus $\theta\sim\thv$. Therefore, to complete the proof, it only
remains to show that $\pi^{\theta}=\pi^{\theta, \thv}$, which
by~\ref{assum:gen:identif:unique:pi} is equivalent to showing that $\pi^{\theta, \thv}$
is an invariant distribution for $K^\theta$.

Let us now prove this latter fact. Using that $\tilde\PP^{\thv}$ is shift-invariant and then
conditioning on $\chunk{Y}{-\infty}{0}$, we have, for any $(A, B)\in\Xsigma\times\Ysigma$,
\begin{align*}
\pi^{\theta, \thv}(A\times B)
&=\tilde{\PE}^{\thv}\left[\Phi^{\theta,\thv}(\chunk{Y}{-\infty}{1};A)\1_B(Y_1)\right]\eqsp,\\
&=\tilde{\PE}^{\thv}\int \Phi^{\theta,\thv}(\chunk{Y}{-\infty}{0},y_1;A)\1_B(y_1)\,p^{\thv}(y_1|\chunk{Y}{-\infty}{0})\,\nu(\rmd y_1)\eqsp,\\
&=\tilde{\PE}^{\thv}\int \Phi^{\theta,\thv}(\chunk{Y}{-\infty}{0},y_1;A)\1_B(y_1)\,p^{\theta,\thv}(y_1|\chunk{Y}{-\infty}{0})\,\nu(\rmd y_1)\eqsp,
\end{align*}
where in the last equality we have used ~\eqref{eq:eq-ptheta-pthetastar}.
Using~\ref{assum:exist:phi:theta} we then get
\begin{align*}
&\pi^{\theta, \thv}(A\times B)\\
&=\tilde{\PE}^{\thv}\int\Phi^{\theta,\thv}(\chunk{Y}{-\infty}{0};\rmd x_0)\kap{Y_0, y_1}{x_0; \rmd x_{1}}\1_A(x_1)\1_B(y_1)\nu(\rmd y_1)\eqsp,\\
&=\tilde{\PE}^{\thv}\int\Phi^{\theta,\thv}(\chunk{Y}{-\infty}{0};\rmd x_0)
K^\theta((x_0, Y_0);A\times B)\eqsp,\\
&=\pi^{\theta, \thv}K^\theta(A\times B)\eqsp.
\end{align*}
Thus, $\pi^{\theta, \thv}$ is an invariant distribution for $K^\theta$, which
concludes the proof.

\end{proof}

\subsection{Construction of the kernel $\Phi^{\theta,\theta'}$ as a backward limit}
\label{subsec:backward}
Again, all along this section, Assumption~\ref{assum:gen:identif:unique:pi} is
supposed to hold and the symbols $\mathbb{P}^{\theta}$ and
$\tilde{\mathbb{P}}^{\theta}$ refer to the probabilities introduced in
\autoref{def:equi:theta}.  In addition to
Assumption~\ref{assum:gen:identif:unique:pi}, \autoref{thm:mainresult}
fundamentally relies on Assumption~\ref{assum:exist:phi:theta}. These
assumptions ensure the existence of the probability kernel $\Phi^{\theta,
  \theta'}$ that yields the definition of $p_1^{\theta,\theta'}(\cdot|\cdot)$.
We now explain how the kernel $\Phi^{\theta,\theta'}$ may arise as a limit
under $\PP^{\theta'}$ of explicit kernels derived from $K^\theta$.  It will
generally apply to observation-driven models, treated in
\autoref{sec:observ-driv-model}, but also in the more classical case of HMMs, as explained in \autoref{sec:hmm}. A natural approach
is to define the kernel $\Phi^{\theta, \theta'}$ as the weak limit of the
following ones.
\begin{definition} \label{def:Phi:x:f} Let $n$ be a positive integer. For all
  $\theta\in\Theta$ and $x\in\Xset$, we define the probability kernel
  $\Phi_{x,n}^\theta$ on $\Yset^{n+1}\times\Xsigma$ by, for all
  $\chunk{y}{0}{n}\in\Yset^{n+1}$ and $A\in\Xsigma$,
$$
\Phi_{x,n}^{\theta}(\chunk{y}{0}{n};A) \eqdef
\frac{\displaystyle \int_{\Xset^{n-1}\times A}\; \prod_{k=0}^{n-1} \kap{y_{k},
    y_{k+1}}{x_{k}; \rmd x_{k+1}}}{\displaystyle \int_{\Xset^n}
  \;\prod_{k=0}^{n-1}\kap{y_{k}, y_{k+1}}{x_{k}; \rmd x_{k+1}}}
\quad\text{with $x_0=x$.}
$$
We will drop the subscript $n$ when no ambiguity occurs.
\end{definition}
It is worth noting that $\Phi_{x,n}^{\theta}(\chunk{Y}{0}{n};\cdot)$ is the conditional distribution of
$X_n$ given $\chunk{Y}{1}{n}$ under $\PP^\theta_{(x,Y_0)}$. To derive the desired $\Phi^{\theta, \theta'}$ we take, for a well-chosen $x$, the
limit of $\Phi_{x,n}^{\theta}(\chunk{y}{0}{n};\cdot)$ as $n\to\infty$ for a
sequence $\chunk{y}{0}{n}$ corresponding to a path under
$\tilde\PP^{\theta'}$. The precise statement is provided in Assumption~\ref{assum:help:exist:phi} below,  which requires the following definition. For
all $\theta\in\Theta$ and for all nonnegative measurable functions $f$ defined on
$\Xset$, we set
$$
\mathcal{F}_f^\theta \eqdef \left\{x \mapsto \kap{y, y'}{x; f}:\; (y,y')\in\Yset^2\right\} \;.
$$
We can now state the assumption as follows.
\begin{hyp}{K}
\item \label{assum:help:exist:phi} For all $\theta\neq\theta'\in\Theta$, there
  exist $x\in\Xset$, a probability kernel $\Phi^{\theta, \theta'}$ on
  $\Yset^{\zsetn}\times\Xsigma$ and a countable class $\mathcal{F}$ of
  $\Xset\to\rsetp$ measurable functions such that for all
  $f\in\mathcal{F}$,
\begin{align*}
\tilde{\PP}^{\theta'}\left(\forall f'\in\mathcal{F}_f^\theta\cup \{f\},\; \lim_{m\to\infty}\Phi_{x,m}^\theta (\chunk{Y}{-m}{0};f') = \Phi^{\theta,\theta'}
(\chunk{Y}{-\infty}{0};f') <\infty\right) = 1\;.
\end{align*}
\end{hyp}

The next lemma shows that, provided that $\mathcal{F}$ is rich enough,
Assumption~\ref{assum:help:exist:phi}  can be directly
used to obtain Assumption~\ref{assum:exist:phi:theta}.
In what follows, we say that a class of $\Xset\to\rset$ functions is separating
if,  for any two probability measures $\mu_1$ and $\mu_2$ on $(\Xset, \Xsigma)$,
the equality of $\mu_1(f)$ and $\mu_2(f)$ over $f$ in the class implies the equality of
the two measures.

\begin{lemma}\label{lem:help:I2}
  Suppose that Assumption~\ref{assum:help:exist:phi} holds and that
  $\mathcal{F}$ is a separating class of functions containing $\1_\Xset$. Then the kernel $\Phi^{\theta,\theta'}$ satisfies
  Assumption~\ref{assum:exist:phi:theta}.
\end{lemma}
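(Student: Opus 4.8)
The plan is to unwind the definitions so that the limit hypothesis in Assumption~\ref{assum:help:exist:phi} becomes exactly the fixed-point identity required by Assumption~\ref{assum:exist:phi:theta}. First I would record the key algebraic relation between the kernels $\Phi_{x,m}^\theta$ at consecutive times: directly from \autoref{def:Phi:x:f}, for every $f\colon\Xset\to\rsetp$ measurable and every $\chunk{y}{-m}{1}$,
$$
\Phi_{x,m+1}^\theta(\chunk{y}{-m}{1};f)=\frac{\displaystyle\int_\Xset\Phi_{x,m}^\theta(\chunk{y}{-m}{0};\rmd x_0)\,\kap{y_0,y_1}{x_0;f}}{\displaystyle\int_\Xset\Phi_{x,m}^\theta(\chunk{y}{-m}{0};\rmd x_0)\,\kap{y_0,y_1}{x_0;\Xset}}\eqsp,
$$
which is just the one-step ``filtering recursion'' obtained by splitting off the last factor in the product defining $\Phi_{x,m+1}^\theta$ and cancelling. (One should note that the denominator is positive by \autoref{rem:kappa-is-positive}, and that $\Xset=\1_\Xset\in\mathcal{F}$ guarantees both numerator and denominator are among the quantities controlled by the assumption.)

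Next I would fix $\theta\neq\theta'$, take the $x$, the kernel $\Phi^{\theta,\theta'}$ and the class $\mathcal{F}$ provided by Assumption~\ref{assum:help:exist:phi}, and work on the $\tilde\PP^{\theta'}$-almost sure event on which, for every $f\in\mathcal{F}$ and every $f'\in\mathcal{F}_f^\theta\cup\{f\}$, the limits $\lim_{m}\Phi_{x,m}^\theta(\chunk{Y}{-m}{0};f')=\Phi^{\theta,\theta'}(\chunk{Y}{-\infty}{0};f')$ and $\lim_{m}\Phi_{x,m}^\theta(\chunk{Y}{-m}{1};f')=\Phi^{\theta,\theta'}(\chunk{Y}{-\infty}{1};f')$ hold and are finite. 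Here I would use shift-invariance of $\tilde\PP^{\theta'}$ to get the convergence ``along $\chunk{Y}{-m}{1}$'' from the stated convergence ``along $\chunk{Y}{-m}{0}$'', after relabelling the index. On that event, fix $f\in\mathcal{F}$; applying the recursion above at times $m\to\infty$ with this $f$ as the test function, the numerator converges (because $x_0\mapsto\kap{Y_0,Y_1}{x_0;f}$ belongs to $\mathcal{F}_f^\theta$, hence to the controlled family) to $\int_\Xset\Phi^{\theta,\theta'}(\chunk{Y}{-\infty}{0};\rmd x_0)\kap{Y_0,Y_1}{x_0;f}$, the denominator converges (taking $f'=\1_\Xset\in\mathcal{F}_f^\theta\cup\{f\}$, using $\1_\Xset\in\mathcal{F}$) to $\int_\Xset\Phi^{\theta,\theta'}(\chunk{Y}{-\infty}{0};\rmd x_0)\kap{Y_0,Y_1}{x_0;\Xset}$, and the left-hand side converges to $\Phi^{\theta,\theta'}(\chunk{Y}{-\infty}{1};f)$. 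Passing to the limit therefore yields
$$
\frac{\displaystyle\int_\Xset\Phi^{\theta,\theta'}(\chunk{Y}{-\infty}{0};\rmd x_0)\,\kap{Y_0,Y_1}{x_0;f}}{\displaystyle\int_\Xset\Phi^{\theta,\theta'}(\chunk{Y}{-\infty}{0};\rmd x_0)\,\kap{Y_0,Y_1}{x_0;\Xset}}=\Phi^{\theta,\theta'}(\chunk{Y}{-\infty}{1};f)\eqsp,\qquad\tilde\PP^{\theta'}\as
$$
for every fixed $f\in\mathcal{F}$.

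Finally I would upgrade ``for every $f\in\mathcal{F}$'' to the set-valued statement ``for all $A\in\Xsigma$'' demanded by Assumption~\ref{assum:exist:phi:theta}. Since $\mathcal{F}$ is countable, the displayed identity holds simultaneously for all $f\in\mathcal{F}$ on a single $\tilde\PP^{\theta'}$-full event. On that event, both sides define finite measures in the numerator: $A\mapsto\int_\Xset\Phi^{\theta,\theta'}(\chunk{Y}{-\infty}{0};\rmd x_0)\kap{Y_0,Y_1}{x_0;A}$ and $A\mapsto\Phi^{\theta,\theta'}(\chunk{Y}{-\infty}{1};A)\cdot\int_\Xset\Phi^{\theta,\theta'}(\chunk{Y}{-\infty}{0};\rmd x_0)\kap{Y_0,Y_1}{x_0;\Xset}$ agree when integrated against every $f\in\mathcal{F}$; after normalizing by the (positive, finite) common denominator these are two probability measures on $(\Xset,\Xsigma)$ with equal $\mathcal{F}$-integrals, so the separating property of $\mathcal{F}$ forces them to coincide as measures, i.e.\ the identity holds for all $A\in\Xsigma$. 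This is precisely Assumption~\ref{assum:exist:phi:theta}. The only genuinely delicate point is the interchange of limit and the measure-theoretic bookkeeping — making sure the single exceptional null set is chosen before quantifying over $f$, that the numerator's test function really lies in the controlled family $\mathcal{F}_f^\theta$, and that shift-invariance is invoked correctly to align the time indices; none of these is hard, but they are where care is needed.
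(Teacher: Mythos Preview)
Your proof is correct and follows essentially the same route as the paper: derive the one-step filtering recursion $\Phi_{x,m+1}^\theta(\chunk{Y}{-m}{1};f)=\Phi_{x,m}^\theta(\chunk{Y}{-m}{0};\kap{Y_0,Y_1}{\cdot;f})/\Phi_{x,m}^\theta(\chunk{Y}{-m}{0};\kap{Y_0,Y_1}{\cdot;\1_\Xset})$, pass to the limit using \ref{assum:help:exist:phi}, and conclude via the separating property of $\mathcal{F}$. Your explicit mention of shift-invariance (which the paper leaves implicit) is a nice touch; one small slip is the parenthetical ``taking $f'=\1_\Xset$'' for the denominator---the test function there is actually $\kap{Y_0,Y_1}{\cdot;\1_\Xset}\in\mathcal{F}_{\1_\Xset}^\theta$, and you need $\1_\Xset\in\mathcal{F}$ precisely so that \ref{assum:help:exist:phi} applied with $f=\1_\Xset$ controls it.
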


\begin{proof}
 Let $x\in\Xset$ be given in
 Assumption~\ref{assum:help:exist:phi}. From~\autoref{def:Phi:x:f}, we may
 write, for all $f\in\mathcal{F}$, setting  $x_{-m}=x$,
\begin{equation*}
 \Phi_{x,m}^{\theta}(\chunk{Y}{-m}{0};f)=\frac{\displaystyle \int f(x_0)\prod_{k=-m}^{-1} \kap{Y_{k}, Y_{k+1}}{x_{k}; \rmd x_{k+1}}}{\displaystyle \int \prod_{k=-m}^{-1} \kap{Y_{k}, Y_{k+1}}{x_{k}; \rmd x_{k+1}}}
\end{equation*}
and, similarly,
\begin{equation}
 \Phi_{x,m+1}^{\theta}(\chunk{Y}{-m}{1};f)=\frac{\displaystyle \int f(x_1)\prod_{k=-m}^{0} \kap{Y_{k}, Y_{k+1}}{x_{k}; \rmd x_{k+1}}}{\displaystyle \int \prod_{k=-m}^{0} \kap{Y_{k}, Y_{k+1}}{x_{k}; \rmd x_{k+1}}}\eqsp.\label{eq:consq:Phi}
\end{equation}
Dividing both numerator and denominator of~\eqref{eq:consq:Phi} by $$\int \prod_{k=-m}^{-1} \kap{Y_{k}, Y_{k+1}}{x_{k}; \rmd x_{k+1}}\;,$$ which is strictly positive by~\autoref{rem:kappa-is-positive}, then~\eqref{eq:consq:Phi} can be rewritten as
\begin{equation} \label{eq:recurr:m}
\Phi_{x,m+1}^{\theta}(\chunk{Y}{-m}{1};f)=\frac{\Phi_{x,m}^\theta\left(\chunk{Y}{-m}{0};\kap{Y_0, Y_1}{\cdot; f}\right)}{\Phi_{x,m}^\theta\left(\chunk{Y}{-m}{0};\kap{Y_0, Y_1}{\cdot; \1_\Xset}\right)}\eqsp.
\end{equation}
Letting $m\to\infty$ and applying Assumption~\ref{assum:help:exist:phi}, then $\tilde\PP^{\theta'}\as$,
\begin{align*}
\Phi^{\theta, \theta'}(\chunk{Y}{-\infty}{1};f)&=\frac{\Phi^{\theta, \theta'}\left(\chunk{Y}{-\infty}{0};\kap{Y_0, Y_1}{\cdot; f}\right)}{\Phi^{\theta, \theta'}\left(\chunk{Y}{-\infty}{0};\kap{Y_0, Y_1}{\cdot; \1_\Xset}\right)}\eqsp,\\
&=\frac{\displaystyle\int \Phi^{\theta, \theta'}\left(\chunk{Y}{-\infty}{0};\rmd x_0\right)\kap{Y_0, Y_1}{x_0; f}}{\displaystyle\int \Phi^{\theta, \theta'}\left(\chunk{Y}{-\infty}{0};\rmd x_0\right)\kap{Y_0, Y_1}{x_0; \1_\Xset}}\eqsp.
\end{align*}
Since $\mathcal{F}$ is a separating class, the proof is concluded.

\end{proof}

\section{Application to hidden Markov models}
\label{sec:hmm}

\subsection{Definitions and assumptions}

 Hidden Markov models belong to a subclass of partially observed Markov models
defined as follows.
\begin{definition}
  Consider a partially observed and partially dominated Markov model given
  in~\autoref{def:partially-dom} with Markov kernels
  $\nsequence{K^\theta}{\theta\in\Theta}$. We will say that this model is a  hidden Markov
  model if the kernel $K^\theta$ satisfies
\begin{equation}\label{eq:defK:hmm}
K^\theta((x, y); \rmd x'\rmd y')=Q^\theta(x;\rmd x')G^\theta(x';\rmd y') \eqsp.
\end{equation}
Moreover, in this context, we always assume that $(\Xset,\Xmet)$ is a complete
separable metric space and $\Xsigma$ denotes the associated Borel
$\sigma$-field.
\end{definition}
In \eqref{eq:defK:hmm}, $Q^\theta$ and $G^\theta$ are transition kernels on
$\Xset \times \Xsigma$ and $\Xset \times \Ysigma$, respectively. Since the model
is partially dominated, we denote by $g^\theta$ the corresponding Radon-Nikodym
derivative of $G^\theta(x;\cdot)$ with respect to the dominating measure $\nu$:
for all $(x,y) \in \Xset \times \Yset$,
$$
\frac{\rmd G^\theta(x;\cdot)}{\rmd \nu} (y)=g^\theta(x;y)\eqsp.
$$
One can directly observe that the unnormalized kernel $\kap{y,y'}{}$ defined in \eqref{eq:kappa} does no longer depend on $y$, and in this case, one can write
\begin{equation}
  \label{eq:kappa-hmm}
\kap{y,y'}{x;\rmd x'}=\kap{y'}{x;\rmd x'} = Q^\theta(x;\rmd x')g^\theta(x';y') \eqsp.
\end{equation}
For any integer $n\geq 1$, $\theta \in \Theta$ and sequence
$\chunk{y}{0}{n-1} \in \Yset^n$, consider the unnormalized kernel
$\Pblock[\theta]{\chunk{y}{0}{n-1}}$ on $\Xset\times\Xsigma$ defined by, for all $x_0
\in \Xset$ and $A \in \Xsigma$,
\begin{equation}
\label{eq:def-Pblock}
\Pblock[\theta]{\chunk{y}{0}{n-1}}(x_0;A)= \idotsint  \left[\prod_{k=0}^{n-1} g^\theta(x_{k};y_{k}) Q^\theta(x_{k}; \rmd x_{k+1}) \right] \1_A (x_{n}) \eqsp,
\end{equation}
so that the MLE $\mlY{\Xinit,n}$, associated to the observations
$\chunk{Y}{0}{n-1}$ with an arbitrary initial distribution $\Xinit$ on $\Xset$
is defined by
$$
\mlY{\Xinit,n} \in \argmax_{\theta \in \Theta} \Xinit \Pblock[\theta]{\chunk{Y}{0}{n-1}} \1_\Xset\eqsp.
$$
We now follow the approach taken by \cite{douc:moulines:2012} in misspecified
models and show that in the context of well-specified models, the maximizing
set of the asymptotic normalized log-likelihood can be identified by relying
neither on the exponential separation of measures, nor on the rates of
convergence to the equilibrium, but only on the uniqueness of the invariant
probability. We note the following fact which can be used to check
\ref{assum:gen:identif:unique:pi}.
\begin{remark}\label{rem:A1-hmm}
In the HMM context, $\pi^\theta$ is an
invariant distribution of $K^\theta$ if and only if $\pi_1^\theta$ is  an
invariant distribution of $Q^\theta$ and $\pi^\theta(\rmd x\rmd y) =\pi_1^\theta(\rmd x)G^\theta(x;\rmd y)$.
\end{remark}
We illustrate the application of the main result (\autoref{thm:mainresult}) in
the context of HMMs by considering the assumptions of
\cite{douc:moulines:2012} in the particular case of blocks of size 1
($r=1$). Of course, general assumptions with arbitrary sizes of blocks could
also be used but this complicates significantly the expressions and may confine
the attention of the reader to unnecessary technicalities. To keep the
discussion simple, we only consider blocks of size 1, which already covers many
cases of interest.

Before listing the main assumptions, we recall the definition of a so-called \emph{local
Doeblin set} (in the particular case where $r=1$) as introduced in
\cite[Definition 1]{douc:moulines:2012}.
\begin{definition} \label{defi:local-Doeblin:one} A set $C$ is local Doeblin
  \wrt\ the family of kernels $\nsequence{Q^\theta}{\theta \in \Theta}$ if
  there exist positive constants $\epsilon^-_C,\ \epsilon^+_C$ and a family of
  probability measures $\nsequence{\lambda_{C}^\theta}{\theta \in \Theta}$ such that,
  for any $\theta \in \Theta$ , $\lambda_{C}^\theta(C) =1$, and, for any $A \in
  \Xsigma$ and $x \in C$,
\begin{equation*}
\epsilon^{-}_C \lambda_{C}^\theta (A) \leq Q^\theta(x; A \cap C) \leq \epsilon^{+}_{C} \lambda_{C}^\theta(A)
 \eqsp.
\end{equation*}
\end{definition}
Consider now the following set of assumptions.
\begin{hyp}{D}
\item \label{assum:q:positive} There exists a  $\sigma$-finite  measure $\mu$ on
  $(\Xset,\Xsigma)$ that dominates $Q^\theta(x; \cdot)$ for all $(x,\theta) \in
  \Xset \times \Theta$. Moreover, denoting $q^\theta(x; x')\eqdef\frac{\rmd
    Q^\theta(x; \cdot)}{\rmd \mu}(x')$, we have
$$
q^\theta(x; x')>0\,, \quad \mbox{for all }(x,x',\theta) \in \Xset \times \Xset\times \Theta\eqsp.
$$
\item\label{assum:g-unif-bounded} For all $y\in\Yset$, we have $\displaystyle\sup_{\theta\in\Theta}\sup_{x\in\Xset}g^\theta(x;y)<\infty$.
\item \label{assum:fullyDominated:filter}
\begin{enumerate} [label=(\alph*)]
\item \label{item:condition-L-K} 
For all $\thv\in\Theta$, there exists a set $K \in \Ysigma$ with
$\tilde \PP^\thv(Y_0 \in K) > 2 / 3$ such that for all $\eta > 0$, there exists a local Doeblin
  set $C \in \Xsigma$ \wrt\ $\nsequence{Q^\theta}{\theta \in \Theta}$
  satisfying,  for all $\theta \in \Theta$ and all $y \in K$,
\begin{equation}
\label{eq:bound-eta-G}
\sup_{x \in C^c} g^\theta(x; y) \leq \eta \sup_{x \in \Xset} g^\theta(x; y) < \infty \eqsp.
\end{equation}
\item \label{item:mino-g-simple} For
  all $\thv\in\Theta$, there exists a set $D \in \Xsigma$ satisfying
$$
\inf_{\theta \in \Theta}\inf_{x \in D} Q^\theta(x; D) > 0 \quad \mbox{and}\quad \tilde\PE^\thv\left[\ln^- \inf_{\theta \in \Theta} \inf_{x \in D} g^\theta(x; Y_0)\right] < \infty \eqsp.
$$
\end{enumerate}
\item \label{assum:majo-g}
For
  all $\thv\in\Theta$,  $\displaystyle\tilde \PE^\thv\left[ \lnp\sup_{\theta \in \Theta} \sup_{x \in \Xset} g^\theta(x;Y_0)\right]<\infty$.
\item \label{assum:continuity}
There exists $p \in \zsetp$ such that for any $x \in \Xset$ and $n \geq p$,
the function $\theta \mapsto \Pblock[\theta]{\chunk{Y}{0}{n}}(x;\Xset)$
is $\tilde \PP^{\thv}\as$ continuous on $\Theta$.
\end{hyp}

\begin{remark}\label{rem:irred-hmm-D1}
Under~\ref{assum:q:positive}, for
all $\theta\in\Theta$, the Markov kernel $Q^\theta$ is $\mu$-irreducible, so that,
using \autoref{rem:A1-hmm},~\ref{assum:gen:identif:unique:pi} reduces to the
existence of a stationary distribution for $Q^\theta$.
\end{remark}
\begin{remark}
  Assumptions \ref{assum:fullyDominated:filter},~\ref{assum:majo-g} and~\ref{assum:continuity}
  and~(\ref{eq:g-evrywhere-strictly-positive}) in \autoref{def:partially-dom}
  correspond to (A1), (A2) and (A3) in \cite{douc:moulines:2012},
  where the blocks are of size $r=1$.
\end{remark}
\begin{remark}
  Assumption~\ref{assum:majo-g} implies~\ref{assum:g-unif-bounded} up to a
  modification of $g^\theta(x;y)$ on $\nu$-negligible set of $y \in
  \Yset$ for all $x\in\Xset$.  Indeed, \ref{assum:majo-g} implies that
  $\sup_\theta \sup_x g^\theta(x;Y_0)<\infty$, $\tilde \PP^\thv\as$, and it can
  be shown that under~\ref{assum:q:positive}, $\pi_2^\thv=\pi^\thv(\Xset\times\cdot)$ is equivalent to
  $\nu$ for all $\theta\in\Theta$.
\end{remark}
In these models, the kernel $\Phi_{x,n}^\theta $ introduced in \autoref{def:Phi:x:f} writes
$$
\Phi_{x,n}^{\theta}(\chunk{y}{1}{n};A) =
\frac{\displaystyle \int_{\Xset^{n-1}\times A}\; \prod_{k=0}^{n-1} Q^{\theta}(x_{k}; \rmd x_{k+1})g^{\theta}(x_{k+1};y_{k+1})}{\displaystyle \int_{\Xset^n}
  \;\prod_{k=0}^{n-1} Q^{\theta}(x_{k}; \rmd x_{k+1})g^{\theta}(x_{k+1};y_{k+1})} \quad\text{with $x_0=x$}\eqsp.
$$
The distribution $\Phi_{x,n}^\theta (\chunk{Y}{0}{n};\cdot)$ is usually
referred to as the {\em filter distribution}.
\autoref{prop:limiteRelativeEntropyRateY} (below) can be derived from
\cite[Proposition~1]{douc:moulines:2012}. For blocks of size $1$, the initial
distributions in  \cite{douc:moulines:2012} are constrained to belong to the
set $\mathcal{M}^\thv(D)$ of all probability distributions $\xi$ defined on
$(\Xset,\Xsigma)$ such that
\begin{equation}
  \label{eq:M-D-cond}
\tilde \PE^\thv\left[\ln^- \inf_{\theta \in \Theta} \int \Xinit(\rmd x) g^\theta(x;Y_0) Q^\theta(x;D) \right] < \infty\eqsp,
\end{equation}
where $D \in \Xsigma$ is the set appearing in
\ref{assum:fullyDominated:filter}.  It turns out that under
\ref{assum:fullyDominated:filter}-\ref{item:mino-g-simple}, all probability
distributions $\Xinit$ satisfy~(\ref{eq:M-D-cond}), so the constraint on the
initial distribution vanishes in our case.
\begin{proposition} \label{prop:limiteRelativeEntropyRateY}
Assume \ref{assum:fullyDominated:filter} and~\ref{assum:majo-g}. Then the
following assertions hold.
\begin{enumerate}[label=(\roman*)]
\item \label{item:lim-ponct-cond-rate-entropy-Y} For any $\theta, \thv \in
  \Theta$, there exists a probability kernel $\Phi^{\theta,\thv}$ on
  $\Yset^{\zsetn}\times\Xsigma$ such that for any $x \in \Xset$,
  $$
     \tilde \PP^{\thv}\left( \mbox{for all bounded $f$},\; \lim_{m \to \infty}\Phi_{x,m}^\theta(\chunk{Y}{-m}{0};f)=\Phi^{\theta,\thv}(\chunk{Y}{-\infty}{0};f)\right)=1\eqsp.
     $$
\item \label{item:lim-ponct-rate-entropy-Y} For any $\theta,\thv \in \Theta$ and
  probability measure $\Xinit$,
$$
\lim_{n \to\infty} n^{-1} \ln \Xinit \Pblock[\theta]{\chunk{Y}{0}{n-1}}\1_\Xset= \ell(\theta,\thv),\quad \PP^{\thv}\as\eqsp,
$$
where
\begin{equation}
\label{eq:def:ell:hmm}
\ell(\theta,\thv) \eqdef \tilde\PE^{\thv}\left[\ln \int \Phi^{\theta,\thv}(\chunk{Y}{-\infty}{0};\rmd x_0) \kap{Y_1}{x_0;\Xset}\right]\;.
\end{equation}
\end{enumerate}
\end{proposition}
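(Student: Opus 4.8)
The plan is to read off both assertions from \cite[Proposition~1]{douc:moulines:2012} in the particular case of blocks of size $r=1$, once three routine matters have been settled: matching the hypotheses, identifying the limiting constant, and discharging the restriction on the initial distribution. For the first, when $r=1$ the general assumptions (A1)--(A2) of \cite{douc:moulines:2012} are exactly~\ref{assum:fullyDominated:filter} and~\ref{assum:majo-g}, together with the everywhere positivity~\eqref{eq:g-evrywhere-strictly-positive} of $g^\theta$ built into \autoref{def:partially-dom}; the continuity assumption (A3) there enters only the subsequent analysis of the maximizing set, not \cite[Proposition~1]{douc:moulines:2012}, and is correspondingly absent from the present statement. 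Applying that proposition in the well-specified regime, that is, with the observations generated under $\PP^\thv$, yields at once the backward convergence of the filter asserted in~\ref{item:lim-ponct-cond-rate-entropy-Y} --- a probability kernel $\Phi^{\theta,\thv}$, \emph{not depending on the starting point} $x$, such that $\Phi_{x,m}^\theta(\chunk{Y}{-m}{0};f)\to\Phi^{\theta,\thv}(\chunk{Y}{-\infty}{0};f)$, $\tilde\PP^\thv\as$, for every bounded $f$ --- and the $\PP^\thv\as$ convergence of $n^{-1}\ln\Xinit\Pblock[\theta]{\chunk{Y}{0}{n-1}}\1_\Xset$ for each initial distribution $\Xinit$ in the class $\mathcal{M}^\thv(D)$ defined by~\eqref{eq:M-D-cond}.

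Next, the limit delivered by \cite{douc:moulines:2012} is, by its construction, the $\tilde\PP^\thv$-expectation of the logarithm of the one-step unnormalized increment of the likelihood evaluated at the limiting filter; inserting the HMM form~\eqref{eq:kappa-hmm} of the kernel, so that $\kap{Y_1}{x_0;\Xset}=\int Q^\theta(x_0;\rmd x_1)\,g^\theta(x_1;Y_1)$, turns this expression into $\ell(\theta,\thv)$ exactly as written in~\eqref{eq:def:ell:hmm}. I would also record at this stage that $\ell(\theta,\thv)$ is well defined with values in $[-\infty,\infty)$: its positive part is dominated by $\lnp\sup_{\theta}\sup_x g^\theta(x;Y_1)$, which is integrable by~\ref{assum:majo-g}, while the lower bound keeping it away from $-\infty$ is one of the by-products of the construction in \cite{douc:moulines:2012}, where the limiting filter is shown to retain a positive amount of mass on the set $D$ of~\ref{item:mino-g-simple}.

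It then remains to pass, in~\ref{item:lim-ponct-rate-entropy-Y}, from $\Xinit\in\mathcal{M}^\thv(D)$ to an \emph{arbitrary} probability $\Xinit$ on $(\Xset,\Xsigma)$. When $\Xinit(D)>0$ this is direct: bounding $\int\Xinit(\rmd x)\,g^\theta(x;Y_0)Q^\theta(x;D)$ from below by $\Xinit(D)\bigl(\inf_{\theta}\inf_{x\in D}Q^\theta(x;D)\bigr)\bigl(\inf_{\theta}\inf_{x\in D}g^\theta(x;Y_0)\bigr)$ and using the subadditivity of $\ln^-$ together with~\ref{item:mino-g-simple} yields~\eqref{eq:M-D-cond}, so $\Xinit\in\mathcal{M}^\thv(D)$. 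For a general $\Xinit$ I would peel off one step of the recursion, writing $\Xinit\Pblock[\theta]{\chunk{Y}{0}{n-1}}\1_\Xset=\bigl(\int\Xinit(\rmd x)\,g^\theta(x;Y_0)\bigr)\,\xi_1\Pblock[\theta]{\chunk{Y}{1}{n-1}}\1_\Xset$, where $\xi_1$ is the probability obtained from $\Xinit$ after one step of the filtering recursion: for fixed $\theta$ the prefactor is a positive and finite random variable by~\ref{assum:majo-g} and~\eqref{eq:g-evrywhere-strictly-positive}, so $n^{-1}$ times its logarithm is negligible, and the $n^{-1}\log$ of the ratio of $\xi_1\Pblock[\theta]{\chunk{Y}{1}{n-1}}\1_\Xset$ to the same quantity started from a fixed element of $\mathcal{M}^\thv(D)$ vanishes by the forgetting property underlying part~\ref{item:lim-ponct-cond-rate-entropy-Y}. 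I expect this last comparison, for initial distributions not already in $\mathcal{M}^\thv(D)$, to be the only point requiring genuine care; the substance of the proposition, namely the backward ergodicity of the filter in~\ref{item:lim-ponct-cond-rate-entropy-Y}, is imported directly from \cite{douc:moulines:2012}, where it rests on the local Doeblin sets of~\ref{item:condition-L-K} and the moment controls~\ref{assum:majo-g} and~\ref{item:mino-g-simple}.
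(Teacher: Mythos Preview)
Your approach matches the paper's exactly: the proposition is not proved in detail there but is explicitly stated to follow from \cite[Proposition~1]{douc:moulines:2012} in the block-size $r=1$ case, with the sole additional remark that under \ref{assum:fullyDominated:filter}-\ref{item:mino-g-simple} \emph{every} probability measure $\Xinit$ already lies in $\mathcal{M}^\thv(D)$, so the constraint on the initial distribution is vacuous.

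The one place you diverge is in your handling of general $\Xinit$: you split into the cases $\Xinit(D)>0$ and $\Xinit(D)=0$, and for the latter you propose peeling off one filtering step and invoking forgetting. The paper does not do this; it simply asserts that \ref{assum:fullyDominated:filter}-\ref{item:mino-g-simple} forces~\eqref{eq:M-D-cond} for \emph{all} $\Xinit$, without singling out the case $\Xinit(D)=0$. Your lower bound for the case $\Xinit(D)>0$ is exactly the computation the paper has in mind; whether the remaining case genuinely requires the extra step you describe, or whether it is covered by a closer reading of the constraint in \cite{douc:moulines:2012} (where the set entering the definition of $\mathcal{M}^\thv(\cdot)$ need not literally coincide with the $D$ of \ref{item:mino-g-simple}), is a point the paper leaves implicit. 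Either way, your peeling argument is harmless and would close any gap, but it goes beyond what the paper actually writes.
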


\subsection{Equivalence-class consistency}

We can now state the main result on the consistency of the MLE for HMMs.
\begin{theorem} \label{thm:consistanceY} Assume
  that~\ref{assum:gen:identif:unique:pi} holds and define
  $\mathbb{P}^{\theta}$, $\tilde{\mathbb{P}}^{\theta}$ and the equivalence
  class $[\theta]$ as in \autoref{def:equi:theta}.  Moreover, suppose that
  $(\Theta,\met)$ is a compact metric space and that
  Assumptions \ref{assum:q:positive}--\ref{assum:continuity} hold. Then, for any
  probability measure $\Xinit$,
\begin{equation*}
\lim_{n \to \infty} \met(\mlY{\Xinit,n},[\thv])=0,\quad \tilde \PP^\thv\as
\end{equation*}
\end{theorem}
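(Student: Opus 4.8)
The plan is to combine the classical argmax-type consistency machinery with the identification of the maximizing set provided by \autoref{thm:mainresult}. First I would verify the hypotheses of \autoref{thm:mainresult} in the HMM setting: Assumption~\ref{assum:gen:identif:unique:pi} is assumed; Assumption~\ref{assum:exist:phi:theta} follows from \autoref{prop:limiteRelativeEntropyRateY}\ref{item:lim-ponct-cond-rate-entropy-Y} via \autoref{lem:help:I2}, since the limit there holds for all bounded $f$ (in particular over a countable separating class containing $\1_\Xset$, e.g.\ a convergence-determining sequence for the complete separable metric space $(\Xset,\Xmet)$) and the convergence over $\mathcal F_f^\theta$ follows from the product structure~(\ref{eq:kappa-hmm}), which makes $\kap{y,y'}{x;f}$ depend on $x$ only through $Q^\theta(x;\cdot)$ applied to the bounded function $x'\mapsto g^\theta(x';y')f(x')$. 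Hence \autoref{thm:mainresult} applies and yields
\begin{equation}\label{eq:argmax-is-class}
\argmax_{\theta\in\Theta}\tilde\PE^{\thv}\left[\ln p^{\theta,\thv}(Y_1|\chunk{Y}{-\infty}{0})\right]=[\thv]\eqsp.
\end{equation}

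Next I would connect this maximizing set to the limiting criterion $\ell(\theta,\thv)$ of~(\ref{eq:def:ell:hmm}). By the definitions~(\ref{eq:def-p_1}) and~(\ref{eq:kappa-hmm}), for $\theta\neq\thv$ one has $p^{\theta,\thv}(Y_1|\chunk{Y}{-\infty}{0})=\int\Phi^{\theta,\thv}(\chunk{Y}{-\infty}{0};\rmd x_0)\kap{Y_1}{x_0;\Xset}$, so $\ell(\theta,\thv)=\tilde\PE^{\thv}[\ln p^{\theta,\thv}(Y_1|\chunk{Y}{-\infty}{0})]$; a parallel identity holds for $\theta=\thv$ using~(\ref{eq:phi:theta:theta}). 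Therefore, by~(\ref{eq:argmax-is-class}), $\ell(\cdot,\thv)$ is maximized exactly on $[\thv]$, and in particular $\ell(\theta,\thv)<\ell(\thv,\thv)$ whenever $\theta\notin[\thv]$. I would also need finiteness of $\ell(\thv,\thv)$, which follows from Assumptions~\ref{assum:fullyDominated:filter}-\ref{item:mino-g-simple} and~\ref{assum:majo-g} bounding $\tilde\PE^{\thv}[\ln^-(\cdot)]$ and $\tilde\PE^{\thv}[\lnp(\cdot)]$ of the integrand.

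Then I would run the standard compactness argument. Fix an initial distribution $\Xinit$; by \autoref{prop:limiteRelativeEntropyRateY}\ref{item:lim-ponct-rate-entropy-Y}, $n^{-1}\ln\Xinit\Pblock[\theta]{\chunk{Y}{0}{n-1}}\1_\Xset\to\ell(\theta,\thv)$, $\PP^{\thv}\as$, for each fixed $\theta$. To upgrade this to a result on the argmax I would establish a local uniform upper bound: for each $\theta_1\in\Theta$ and each open neighborhood, using the continuity Assumption~\ref{assum:continuity} together with the domination Assumption~\ref{assum:majo-g}, show
\[
\limsup_{n\to\infty}\;\sup_{\theta\in\ball{\theta_1}{\rho}} n^{-1}\ln\Xinit\Pblock[\theta]{\chunk{Y}{0}{n-1}}\1_\Xset\;\le\;\sup_{\theta\in\ball{\theta_1}{\rho}}\ell(\theta,\thv)\eqsp,
\]
and let $\rho\downarrow0$ so that the right-hand side tends to $\ell(\theta_1,\thv)$ (using upper semicontinuity of $\ell(\cdot,\thv)$, itself a consequence of Fatou applied to the continuous integrand). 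Covering the compact complement of any neighborhood $V$ of $[\thv]$ by finitely many such balls on which $\ell(\cdot,\thv)$ stays bounded away from $\ell(\thv,\thv)$, while $n^{-1}\ln\Xinit\Pblock[\thv]{\chunk{Y}{0}{n-1}}\1_\Xset\to\ell(\thv,\thv)$, forces $\mlY{\Xinit,n}\in V$ eventually, $\tilde\PP^{\thv}\as$; since $V$ is arbitrary, $\met(\mlY{\Xinit,n},[\thv])\to0$.

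The main obstacle is the local uniform upper bound on the normalized log-likelihood. The pointwise convergence in \autoref{prop:limiteRelativeEntropyRateY} is not uniform in $\theta$, so turning it into a statement about the maximizer requires controlling $\sup_{\theta\in\ball{\theta_1}{\rho}}\Xinit\Pblock[\theta]{\chunk{Y}{0}{n-1}}\1_\Xset$ uniformly; the natural route is to dominate this supremum, on suitable large-probability events, by a product of one-step quantities $\sup_{\theta}\sup_x g^\theta(x;Y_k)$ whose logarithms are integrable by~\ref{assum:majo-g}, combined with the $\tilde\PP^{\thv}\as$ continuity of $\theta\mapsto\Pblock[\theta]{\chunk{Y}{0}{n}}(x;\Xset)$ from~\ref{assum:continuity} and a subadditivity/Birkhoff argument; this is essentially the argument underlying \cite[Proposition~1]{douc:moulines:2012}, which I would invoke or adapt rather than reprove.
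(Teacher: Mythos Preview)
Your overall architecture matches the paper's: verify \ref{assum:exist:phi:theta} via \autoref{lem:help:I2} and \autoref{prop:limiteRelativeEntropyRateY}\ref{item:lim-ponct-cond-rate-entropy-Y}, apply \autoref{thm:mainresult} to identify the argmax of $\ell(\cdot,\thv)$ with $[\thv]$, and combine with an argmax-convergence statement. The paper simply cites \cite[Theorem~2]{douc:moulines:2012} for the latter (upper semicontinuity of $\ell(\cdot,\thv)$ and $\met(\mlY{\Xinit,n},\Theta_\star)\to0$), whereas you sketch that argument yourself and acknowledge you would ultimately invoke that reference; this is a cosmetic difference.

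There is, however, a real gap in your treatment of the case $\theta=\thv$. You write that ``a parallel identity holds for $\theta=\thv$ using~(\ref{eq:phi:theta:theta})'', meaning $\ell(\thv,\thv)=\tilde\PE^{\thv}[\ln p_1^{\thv}(Y_1|\chunk{Y}{-\infty}{0})]$. But $\ell(\thv,\thv)$ in~(\ref{eq:def:ell:hmm}) is expressed through the \emph{limiting} kernel $\Phi^{\thv,\thv}$ of \autoref{prop:limiteRelativeEntropyRateY}\ref{item:lim-ponct-cond-rate-entropy-Y}, while $p_1^{\thv}(\cdot|\cdot)$ is the true conditional density, which corresponds to the kernel $\Phi^{\thv}$ of \autoref{rem:intuition:kernel}. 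Equation~(\ref{eq:phi:theta:theta}) only says that $\Phi^{\thv}$ satisfies the filter recursion; it does \emph{not} identify $\Phi^{\thv,\thv}$ with $\Phi^{\thv}$, which is precisely the filter-stability question you cannot assume here. The paper closes this gap differently: it uses Barron's theorem to write $\tilde\PE^{\thv}[\ln p_1^{\thv}(Y_1|\chunk{Y}{-\infty}{0})]=\lim_n n^{-1}\ln \pi_1^{\thv}\Pblock[\thv]{\chunk{Y}{0}{n-1}}\1_\Xset$, writes $\ell(\thv,\thv)$ as the same limit with an arbitrary initial $\Xinit$ via \autoref{prop:limiteRelativeEntropyRateY}\ref{item:lim-ponct-rate-entropy-Y}, and then shows that the ratio $\Xinit\Pblock[\thv]{\chunk{Y}{0}{n-1}}\1_\Xset / \pi_1^{\thv}\Pblock[\thv]{\chunk{Y}{0}{n-1}}\1_\Xset$ converges $\tilde\PP^{\thv}$-a.s.\ to a positive limit by L\'evy's zero-one law (exploiting \ref{assum:q:positive}), so the normalized logs coincide. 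You should replace your one-line appeal to~(\ref{eq:phi:theta:theta}) by this argument.
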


\begin{proof}
According to \cite[Theorem 2]{douc:moulines:2012},
$\theta\mapsto\ell(\theta,\thv)$ defined
by~(\ref{eq:def:ell:hmm}) is upper semi-continuous
(so that $\Theta_\star:=\argmax_{\theta \in \Theta}\ell(\theta,\thv)$ is
non-empty) and moreover
$$
\lim_{n \to \infty} \met(\mlY{\Xinit,n},\Theta_\star)=0\,, \quad \tilde \PP^\thv\as
$$
The proof then follows from \autoref{thm:mainresult}, provided that
$\ell(\theta,\thv)$ can be expressed as in the statement of
\autoref{thm:mainresult} and that \ref{assum:exist:phi:theta} is
satisfied. First note that, for $\theta\neq\thv$, the integral appearing within the logarithm in
\eqref{eq:def:ell:hmm} corresponds to
$p^{\theta,\thv}(Y_1|\chunk{Y}{-\infty}{0})$ with $p^{\theta,\thv}$ as defined
in~(\ref{eq:def-p_1}).

Let $\mcf$ be a countable separating class of nonnegative bounded functions
containing $\1_\Xset$, see~\cite[Theorem~6.6,~Chapter~6]{parthasarathy:2005}
for the existence of such a class. By \autoref{lem:help:I2}, we check
\ref{assum:exist:phi:theta} by showing that \ref{assum:help:exist:phi} is
satisfied. Condition~\ref{assum:g-unif-bounded} and~(\ref{eq:kappa-hmm}) imply
that for all bounded functions $f$, $\mathcal{F}_f^\theta$ is a class of bounded
functions, and this in turn implies \ref{assum:help:exist:phi} by applying
\autoref{prop:limiteRelativeEntropyRateY}-\ref{item:lim-ponct-cond-rate-entropy-Y}
to all $x$. Thus, \ref{assum:exist:phi:theta}
is satisfied, and for $\theta\neq \thv$, $\ell(\theta,\thv)$ can be expressed
as in the statement of \autoref{thm:mainresult}. To complete the proof, it only
remains to consider the case where $\theta=\thv$ and to show that $\ell(\thv,\thv)$ can
be written as
\begin{equation}
  \label{eq:limit-likelihood-hmm-trueparam}
\ell(\thv,\thv) = \tilde\PE^{\thv}\left[\ln p_1^\thv(Y_1|\chunk{Y}{-\infty}{0})\right] \eqsp,
\end{equation}
where $p_1^\thv(\cdot|\cdot)$ is the conditional density given in~\eqref{eq:p-theta-density}.
According to \cite[Theorem 1]{barron:1985}, we have
\begin{equation}\label{eq:hmm:first}
\tilde\PE^{\thv}\left[\ln p_1^\thv(Y_1|\chunk{Y}{-\infty}{0})\right]= \lim_{n \to \infty} n^{-1} \ln \pi_1^\thv \Pblock[\thv]{\chunk{Y}{0}{n-1}}\1_\Xset\,, \quad \tilde\PP^\thv\as
\end{equation}
On the other hand, applying \autoref{prop:limiteRelativeEntropyRateY}-\ref{item:lim-ponct-rate-entropy-Y} yields
\begin{equation}\label{eq:hmm:second}
\ell(\thv, \thv)=  \lim_{n \to\infty} n^{-1} \ln \Xinit \Pblock[\thv]{\chunk{Y}{0}{n-1}}\1_\Xset,\quad \tilde\PP^{\thv}\as
\end{equation}
Observe that, by using~\ref{assum:q:positive}, the probability measure
$\Xinit\Pblock[\thv]{y_0}$ admits a density with respect to $\mu$ given by
  \begin{equation}
    \label{eq:densityx1y0}
 \frac{\rmd \Xinit\Pblock[\thv]{y_0}}{\rmd \mu}(x_1)=\int \Xinit(\rmd x_0)g^{\thv}(x_0;y_0)\,q^{\thv}(x_0; x_1)\;.
  \end{equation}
We further get, for all $\chunk{y}{0}{n-1}\in\Yset^n$,
$$
\Xinit \Pblock[\thv]{\chunk{y}{0}{n-1}}\1_\Xset = \int  \frac{\rmd
  \Xinit\Pblock[\thv]{y_0}}{\rmd \mu}(x_1) \times
\left(\delta_{x_1}\Pblock[\thv]{\chunk{y}{1}{n-1}}\1_{\Xset}\right) \;\mu(\rmd x_1)\;,
$$
and under $\PP^\thv$, the joint density of $(X_1,\chunk{Y}{0}{n-1})$ with respect
to $\mu\otimes\nu^{\otimes n}$ is given by
$$
p^{\thv}_{1,n}(x_1,\chunk{y}{0}{n-1}):=  \frac{\rmd \pi_1^\thv\Pblock[\thv]{y_0}}{\rmd
  \mu}(x_1)
\times
\left(\delta_{x_1}\Pblock[\thv]{\chunk{y}{1}{n-1}}\1_{\Xset}\right) \;.
$$
Note that we similarly have, for all $y_0\in\Yset$ and $x_1\in\Xset$,
  \begin{equation}
    \label{eq:densityx1y0-stat}
 \frac{\rmd \pi_1^\thv\Pblock[\thv]{y_0}}{\rmd \mu}(x_1)=\int \pi_1^\thv(\rmd x_0)g^{\thv}(x_0; y_0)\,q^{\thv}(x_0; x_1)\;.
\end{equation}
The four previous displays yield, for all $\chunk{y}{0}{n-1}\in\Yset^n$,
\begin{multline*}
\Xinit \Pblock[\thv]{\chunk{y}{0}{n-1}}\1_\Xset \\=
\int\frac{\int \Xinit(\rmd x_0) g^\thv(x_0; y_0) q^\thv(x_0; x_1)}{\int
  \pi_1^\thv(\rmd x_0) g^\thv(x_0; y_0) q^\thv(x_0; x_1)}
p^{\thv}_{1,n}(x_1,\chunk{y}{0}{n-1}) \;\mu(\rmd x_1)\;.
\end{multline*}
Dividing by the density of $\chunk{Y}{0}{n-1}$  with respect
to $\nu^{\otimes n}$   under $\PP^\thv$, we get
$$
\frac{\Xinit \Pblock[\thv]{\chunk{Y}{0}{n-1}}\1_\Xset}{\pi_1^\thv \Pblock[\thv]{\chunk{Y}{0}{n-1}}\1_\Xset}=
\CPEu[\thv]{}{R(X_1,Y_0)}{\chunk{Y}{0}{n-1}}\eqsp,\quad\tilde\PP^\thv\as\;,
$$
where $R(x_1,y_0)$ is the ratio between~(\ref{eq:densityx1y0})
and~(\ref{eq:densityx1y0-stat}), which are positive densities with respect
to $\mu\otimes\nu$. Since the denominator~(\ref{eq:densityx1y0-stat}) is the density of $(X_1,Y_0)$ under $\PP^\thv$, we then have
$$
\PE^{\thv}[R(X_1,Y_0)]= 1\eqsp.
$$
By L\'{e}vy's zero-one law, we thus get that
$$
\lim_{n\to\infty}\frac{\Xinit \Pblock[\thv]{\chunk{Y}{0}{n-1}}\1_\Xset}{\pi_1^\thv \Pblock[\thv]{\chunk{Y}{0}{n-1}}\1_\Xset}=\CPEu[\thv]{}{R(X_1,Y_0)}{\chunk{Y}{0}{\infty}}\eqsp,\quad\tilde\PP^{\thv}\as\eqsp,
$$
and since by \ref{assum:q:positive}, $R(x_1,y_0)$ takes only positive values, this limit is thus positive. This implies
$$
\lim_{n\to\infty} n^{-1} \ln \frac{\Xinit \Pblock[\thv]{\chunk{Y}{0}{n-1}}\1_\Xset}{\pi_1^\thv \Pblock[\thv]{\chunk{Y}{0}{n-1}}\1_\Xset}=0 \quad \tilde\PP^\thv\as
$$
Combining with \eqref{eq:hmm:first} and \eqref{eq:hmm:second}, we finally
obtain~(\ref{eq:limit-likelihood-hmm-trueparam}), which concludes the proof.
\end{proof}

\subsection{A polynomially ergodic example}
As an application of \autoref{thm:consistanceY}, we consider
the HMM model described in \autoref{exple:hmm}. In addition to the assumptions
introduced in  \autoref{exple:hmm}, we assume that $U_0$ and $V_0$ are independent
and centered and they both admit densities with respect to the Lebesgue measure
$\lambda$ over $\rset$, denoted by $r$ and $h$, respectively, and
\begin{hyp}{E}
\item \label{item:q} the density $r$ satisfies:
\begin{enumerate}[label=(\alph*)]
\item \label{item:compact} $r$ is continuous and positive over $\rset$,
\item \label{item:tail:q} there exists $\alpha>2$ such that
  $r(u)|u|^{\alpha+1}$ is bounded away from $\infty$ as $|u|\to\infty$ and  from 0
  as $u\to\infty$,
\end{enumerate}
\end{hyp}
\begin{hyp}{E}
\item \label{item:g} the density $h$ satisfies:
\begin{enumerate}[label=(\alph*)]
\item $h$ is continuous and positive over $\rset$, and  $\lim_{|v| \to \infty} h(v)=0$,
\item \label{item:g:moment:mino} there exist $\beta \in [1,\alpha-1)$ (where
  $\alpha$ is given in \ref{item:q}) and $b,c>0$ such that
  $\PE(|V_0|^\beta)<\infty$ and $h(v)\geq b\,\rme^{-c|v|^{\beta}}$ for all
  $v\in\rset$.
\end{enumerate}
\end{hyp}
For example, a symmetric Pareto distribution with a parameter strictly larger
than 2 satisfies \ref{item:q} and provided that $\alpha>3$, \ref{item:g} holds
with a centered Gaussian distribution.  The model is parameterized by
$\theta=(m,a) \in \Theta\eqdef [\underline{m},\overline{m}] \times
[\underline{a},\overline{a}]$ where $0<\underline{m}<\overline{m}$ and
$\underline{a}<\overline{a}$. In this model, the Markov transition $Q^\theta$
of $\sequence{X}[k][\zsetp]$ has a transition density $q^\theta$ with respect to
the dominating measure $\mu(\rmd x)=\lambda(\rmd x) +\delta_0(\rmd x)$, which
can be written as follows: for all $(x,x') \in \rsetp^2$,
\begin{equation}
\label{eq:expression:q}
q^\theta(x;x')= r(x'-x+m)\1\{x'>0\}+ \left(\int_{-\infty}^{m-x} r(u) \;\rmd u\right)\1\{x'=0\} \eqsp.
\end{equation}
Moreover, \eqref{eq:def:exemple:hmm} implies
\begin{equation}
\label{eq:expression:g}
g^\theta(x;y)=h(y-a x)\eqsp.
\end{equation}

Following  \cite{jarner:roberts:2002}, we have the following lemma.
\begin{lemma} \label{lem:exemple:hmm}
 Assume \ref{item:q} and  \ref{item:g}. For all $\theta \in \Theta$, the Markov kernel $Q^\theta$ is not geometrically ergodic. Moreover, $Q^\theta$ is polynomially ergodic and its (unique) stationary distribution  $\pi_1^\theta$, defined on $\Xset=\rsetp$, satisfies $\int \pi_1^\theta(\rmd x) x^{\beta}<\infty$, for all $\beta \in [1,\alpha-1)$.
\end{lemma}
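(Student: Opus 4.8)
The plan is to realize $Q^\theta$ as a $\psi$-irreducible, strongly aperiodic Markov chain on $\Xset=\rsetp$ admitting $\{0\}$ as an accessible atom, and then to play off two classical drift criteria against each other: the implication that geometric ergodicity forces a finite exponential moment of the return time to the atom, versus the subgeometric (polynomial) drift conditions of \cite{tuominen:tweedie:1994} and \cite{jarner:roberts:2002}. First I would record the structural facts that follow from \ref{item:q}: since $r$ is continuous and positive everywhere, for every $x\in\rsetp$ the kernel $Q^\theta(x;\cdot)$ has an everywhere positive density $r(\,\cdot-x+m)$ on $(0,\infty)$ together with an atom $Q^\theta(x;\{0\})=\int_{-\infty}^{m-x}r(u)\,\rmd u>0$; hence $\{0\}$ is an accessible atom, $Q^\theta(0;\{0\})=\PP(U_0\le m)>0$ so the chain is strongly aperiodic, and every compact set $[0,K]$ is small. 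Positive Harris recurrence, and therefore the existence and uniqueness of $\pi_1^\theta$, will then follow from the drift inequality established below with $\beta=1$ (available because $\alpha>2$ makes the range $\beta\in[1,\alpha-1)$ nonempty). Throughout, $\theta$ is fixed and $\PE_0$ denotes expectation for the $Q^\theta$-chain started at $0$.

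For the absence of geometric ergodicity, suppose $Q^\theta$ were geometrically ergodic; then, by the equivalence with the geometric drift condition, the first return time $\tau_0=\inf\{n\ge1:X_n=0\}$ to the atom satisfies $\PE_0[\kappa^{\tau_0}]<\infty$ for some $\kappa>1$. Iterating $(a+b)^+\ge a+b$ gives the pathwise bound $X_n\ge X_1+\sum_{j=2}^n(U_j-m)$ for all $n\ge2$, so on $\{X_1>0\}$ the event $\{X_n=0\}$ forces $\sum_{j=2}^n(U_j-m)\le-X_1$. Putting $N=\lfloor X_1/(2m)\rfloor$ and using independence of $(U_j)_{j\ge2}$ from $U_1$ together with the finiteness of $\sigma^2=\mathrm{Var}(U_0)$ (finite since $\alpha>2$), Kolmogorov's maximal inequality yields $\PP(\tau_0\le N\mid X_1)\le 2\sigma^2/(mX_1)$, hence $\PP(\tau_0>N\mid X_1)\ge 1/2$ once $X_1\ge 4\sigma^2/m$. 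Since $X_1=(U_1-m)^+$ and $N\ge X_1/(2m)-1$, this gives $\PE_0[\kappa^{\tau_0}]\ge\tfrac{1}{2\kappa}\,\PE\!\left[\kappa^{(U_1-m)/(2m)}\,\1\{U_1-m\ge4\sigma^2/m\}\right]$, which is $+\infty$ for every $\kappa>1$ because \ref{item:q}-\ref{item:tail:q} gives the lower tail bound $r(u)\ge c_0\,u^{-\alpha-1}$ for large $u>0$, so that $U_0$ has a genuine polynomial right tail and $\PE[\rme^{\lambda U_0^+}]=\infty$ for all $\lambda>0$. This contradiction proves the first assertion.

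For polynomial ergodicity and the moment bound, fix $\beta\in[1,\alpha-1)$ and take $V(x)=(1+x)^{\beta+1}$; note that $\beta+1<\alpha$ gives $\PE[|U_0|^{\beta+1}]<\infty$ while $\alpha>2$ gives $\PE[U_0^2]<\infty$. The target is the subgeometric drift inequality $Q^\theta V\le V-c\,(1+x)^{\beta}+b\,\1_{[0,K]}$ for suitable $c,b,K>0$, which is $Q^\theta V\le V-cV^{1-1/\kappa}+b\,\1_C$ with $\kappa=\beta+1>1$ and $C=[0,K]$ small; \cite{jarner:roberts:2002} then gives polynomial ergodicity (with a rate improving as $\beta\uparrow\alpha-1$) and \cite{tuominen:tweedie:1994} gives $\pi_1^\theta(f)<\infty$ for $f(x)=(1+x)^\beta$, that is $\int\pi_1^\theta(\rmd x)\,x^\beta<\infty$. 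To get the inequality I would write $1+(x+U_0-m)^+$ as $1+x+(U_0-m)$ on $\{U_0-m\ge-x\}$ and as $1$ on the complement, and on the bulk event $\{|U_0-m|\le x/2\}$ expand $(1+x+U_0-m)^{\beta+1}=(1+x)^{\beta+1}\bigl(1+(U_0-m)/(1+x)\bigr)^{\beta+1}$ by a second-order Taylor formula: the first-order term contributes $(\beta+1)(1+x)^{\beta}\,\PE[U_0-m]=-(\beta+1)m\,(1+x)^{\beta}$, the second-order term is $O\bigl((1+x)^{\beta-1}\bigr)$ because $\PE[U_0^2]<\infty$, and the three remaining rare contributions — an upward jump $\{U_0-m>x/2\}$, a deep downward move $\{-x\le U_0-m<-x/2\}$, and the atom $\{U_0-m<-x\}$ — are each $O\bigl((1+x)^{\beta+1-\alpha}\bigr)+O\bigl((1+x)^{-\alpha}\bigr)$, using the two-sided tail bound $r(u)|u|^{\alpha+1}=O(1)$ as $|u|\to\infty$ from \ref{item:q}. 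Since $\beta+1-\alpha<0\le\beta$, $-\alpha<\beta$ and $\beta-1<\beta$, all these remainders are $o\bigl((1+x)^\beta\bigr)$, so $Q^\theta V(x)\le V(x)-\tfrac{(\beta+1)m}{2}(1+x)^\beta$ for $x\ge K$ with $K$ large, while on $[0,K]$ one bounds $Q^\theta V(x)\le\PE[(1+K+U_0^+)^{\beta+1}]<\infty$. I expect the main obstacle to be exactly this uniform control of the error terms in the drift computation — in particular checking that the heavy upward jumps contribute only $O\bigl((1+x)^{\beta+1-\alpha}\bigr)$, which is where the hypothesis $\beta<\alpha-1$ is used essentially — together with making the "a descent from height $X_1$ takes of order $X_1$ steps" estimate quantitative in the non-geometric part.
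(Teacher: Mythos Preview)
Your proposal is correct and follows essentially the same strategy as the paper: polynomial ergodicity via the drift function $V(x)=(1+x)^{\beta+1}$ (the paper simply invokes \cite[Proposition~5.1]{jarner:roberts:2002} and \cite[Theorem~14.0.1]{meyn:tweedie:1993} rather than carrying out the drift computation by hand), and non-geometric ergodicity by contradiction through an infinite exponential moment of the return time to the atom $\{0\}$, established by a maximal-inequality lower bound on the hitting time after a large first jump. The only notable variation is that you use Kolmogorov's second-moment maximal inequality where the paper uses Doob's first-moment inequality on the centered partial sums; both are valid here since $\alpha>2$ guarantees finite variance, and your argument is arguably a bit more direct in that it conditions on $X_1$ for the reflected chain rather than passing through the auxiliary stopping times $\tau(u)$ of the unreflected walk.
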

\begin{proof}
The proof of this Lemma is postponed to \autoref{sec:app:example:hmm} in
\autoref{sec:postponed-proofs}.
\end{proof}

\begin{proposition} \label{prop:consistanceYexample} Consider the HMM of
  \autoref{exple:hmm} under Assumptions~\ref{item:q} and \ref{item:g}. Then
  \ref{assum:gen:identif:unique:pi} holds and we define $\mathbb{P}^{\theta}$,
  $\tilde{\mathbb{P}}^{\theta}$ and the equivalence class $[\theta]$ as in
  \autoref{def:equi:theta}.  Moreover, for any probability measure $\Xinit$,
  the MLE $\mlY{\Xinit,n}$ is equivalence-class consistent, that is, for any
  $\thv\in\Theta$,
\begin{equation*}
\lim_{n \to \infty} \met(\mlY{\Xinit,n},[\thv])=0,\quad \tilde \PP^\thv\as
\end{equation*}
\end{proposition}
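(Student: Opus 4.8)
The plan is to apply \autoref{thm:consistanceY} to the HMM of \autoref{exple:hmm}, so the task reduces to verifying its hypotheses: that $(\Theta,\met)$ is a compact metric space, that \ref{assum:gen:identif:unique:pi} holds, and that Assumptions \ref{assum:q:positive}--\ref{assum:continuity} are satisfied. Compactness of $\Theta = [\underline{m},\overline{m}]\times[\underline{a},\overline{a}]$ with the Euclidean metric is immediate. For \ref{assum:gen:identif:unique:pi}, by \autoref{rem:irred-hmm-D1} it suffices (once \ref{assum:q:positive} is checked) to exhibit a stationary distribution for $Q^\theta$; this is provided by \autoref{lem:exemple:hmm}, which asserts that $Q^\theta$ is polynomially ergodic with a unique stationary distribution $\pi_1^\theta$ having a finite $\beta$-moment for all $\beta\in[1,\alpha-1)$. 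Note that the stationarity of $\sequence{X}[k][\zsetp]$ propagates to $\sequence{Y}[k][\zsetp]$ through $\pi^\theta(\rmd x\,\rmd y)=\pi_1^\theta(\rmd x)G^\theta(x;\rmd y)$, and the moment bound on $\pi_1^\theta$ will be the key input for the integrability conditions below.

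Next I would check \ref{assum:q:positive}: taking $\mu(\rmd x)=\lambda(\rmd x)+\delta_0(\rmd x)$ as in the statement, the explicit formula \eqref{eq:expression:q} shows $q^\theta(x;x')>0$ for all $(x,x')\in\rsetp^2$ since $r$ is positive on all of $\rset$ by \ref{item:compact} and $\int_{-\infty}^{m-x}r(u)\,\rmd u>0$ for any $m>0$, $x\ge0$. For \ref{assum:g-unif-bounded}, from \eqref{eq:expression:g} we have $g^\theta(x;y)=h(y-ax)$, and since $h$ is continuous with $\lim_{|v|\to\infty}h(v)=0$ by \ref{item:g}, $h$ is bounded on $\rset$, so $\sup_{\theta,x}g^\theta(x;y)\le\|h\|_\infty<\infty$ uniformly in $y$; in fact \ref{assum:majo-g} then holds trivially because $\sup_\theta\sup_x g^\theta(x;Y_0)\le\|h\|_\infty$ so its $\lnp$ is a bounded constant. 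Condition \ref{assum:continuity} follows because $\theta\mapsto q^\theta(x;x')=r(x'-x+m)\1\{x'>0\}+(\int_{-\infty}^{m-x}r)\1\{x'=0\}$ and $\theta\mapsto g^\theta(x;y)=h(y-ax)$ are continuous (by continuity of $r$ and $h$), and $\Pblock[\theta]{\chunk{Y}{0}{n}}(x;\Xset)$ is a finite integral of products of these; one passes continuity through the integral using dominated convergence together with the uniform bound $g^\theta\le\|h\|_\infty$ and the fact that $q^\theta(x;\cdot)$ is a probability density (so $n=p=1$ works, or any $p$).

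The main obstacle is \ref{assum:fullyDominated:filter}, the local-Doeblin and minoration conditions. For part \ref{item:condition-L-K}: given $\thv$, choose $K=[-M,M]\subset\Yset$ with $M$ large enough that $\tilde\PP^\thv(Y_0\in K)>2/3$ (possible since $\tilde\PP^\thv$ is a genuine probability measure on $\rset$). For $\eta>0$ one must produce a local Doeblin set $C\subset\rsetp$ with $\sup_{x\in C^c}g^\theta(x;y)\le\eta\sup_{x\in\Xset}g^\theta(x;y)$ for all $\theta$ and $y\in K$; since $g^\theta(x;y)=h(y-ax)$, for $y\in[-M,M]$ and $a\in[\underline a,\overline a]$ one has $|y-ax|\to\infty$ as $x\to\infty$ uniformly in $(y,a)$, and $h(v)\to 0$ as $|v|\to\infty$, so taking $C=[0,R]$ for $R$ large forces $\sup_{x>R}g^\theta(x;y)$ small while $\sup_x g^\theta(x;y)\ge$ some positive constant uniform over the compact $K\times\Theta$ (using positivity and continuity of $h$ and of $r$); one must also verify $[0,R]$ is local Doeblin \wrt\ $\nsequence{Q^\theta}{}$, which follows from \ref{assum:q:positive} since $q^\theta$ is continuous and strictly positive, hence bounded above and below on the compact set $[0,R]^2$ (with the atom at $0$ handled separately using $\int_{-\infty}^{m-x}r\ge\int_{-\infty}^{-R}r>0$ uniformly). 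For part \ref{item:mino-g-simple}, take $D=[0,R']$: then $\inf_{\theta}\inf_{x\in D}Q^\theta(x;D)>0$ again by continuity and positivity of $q^\theta$ on $[0,R']^2$, and $\inf_\theta\inf_{x\in D}g^\theta(x;Y_0)=\inf_{a,x}h(Y_0-ax)\ge b\,\rme^{-c\,(|Y_0|+\overline R)^\beta}$ for a suitable constant $\overline R=\max(|\underline a|,|\overline a|)R'$ by the lower bound $h(v)\ge b\,\rme^{-c|v|^\beta}$ in \ref{item:g:moment:mino}; hence $\ln^-\inf_{\theta}\inf_{x\in D}g^\theta(x;Y_0)\le c\,(|Y_0|+\overline R)^\beta+|\ln b|$, and $\tilde\PE^\thv[|Y_0|^\beta]<\infty$ because $Y_0=aX_0+V_0$ under stationarity with $\PE|V_0|^\beta<\infty$ and $\int\pi_1^\thv(\rmd x)x^\beta<\infty$ from \autoref{lem:exemple:hmm} (here $\beta<\alpha-1$ is exactly the range allowed), so the required expectation is finite. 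With all of \ref{assum:q:positive}--\ref{assum:continuity} verified, \autoref{thm:consistanceY} applies and gives $\lim_n\met(\mlY{\Xinit,n},[\thv])=0$, $\tilde\PP^\thv\as$, which is the claim.
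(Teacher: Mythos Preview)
Your approach is essentially the same as the paper's: verify \ref{assum:gen:identif:unique:pi} via \autoref{lem:exemple:hmm}, then check \ref{assum:q:positive}--\ref{assum:continuity} one by one and appeal to \autoref{thm:consistanceY}. The arguments for \ref{assum:q:positive}, \ref{assum:g-unif-bounded}, \ref{assum:fullyDominated:filter} and \ref{assum:majo-g} match the paper's, and your treatment of the local Doeblin property and of the moment condition in \ref{assum:fullyDominated:filter}\ref{item:mino-g-simple} (via $h(v)\ge b\,\rme^{-c|v|^\beta}$ and the $\beta$-moment of $\pi_1^{\thv}$ from \autoref{lem:exemple:hmm}) is correct and slightly more explicit than the paper's.

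There is one genuine gap, in your verification of \ref{assum:continuity}. To pass the limit in $\theta$ through the integral defining $\Pblock[\theta]{\chunk{Y}{0}{n}}(x;\Xset)$ by dominated convergence, you need a $\theta$-\emph{independent} integrable majorant. The fact that ``$q^\theta(x;\cdot)$ is a probability density'' only gives $\int q^\theta(x;x')\,\mu(\rmd x')=1$ for each fixed $\theta$; it does not bound $\sup_{\theta\in\Theta}q^\theta(x;x')$ by anything integrable. The paper closes this gap by an induction on $n$ together with the tail assumption \ref{item:q}\ref{item:tail:q}: since $q^\theta(x_0;x_1)=r(x_1-x_0+m)\1\{x_1>0\}+\bigl(\int_{-\infty}^{m-x_0}r\bigr)\1\{x_1=0\}$ and $r(u)\lesssim|u|^{-\alpha-1}$ at infinity, one gets
\[
\int_{\Xset}\sup_{\theta\in\Theta} q^\theta(x_0;x_1)\,\mu(\rmd x_1)<\infty\quad\text{for every }x_0\in\Xset,
\]
which, combined with $\Pblock[\theta]{\chunk{y}{1}{n}}(x_1;\Xset)\le\|h\|_\infty^{n}$, yields the required dominating function at each inductive step. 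You should insert this argument; without it, dominated convergence is not justified and \ref{assum:continuity} is not established.
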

\begin{proof}
To apply \autoref{thm:consistanceY}, we need to check
\ref{assum:gen:identif:unique:pi} and
\ref{assum:q:positive}--\ref{assum:continuity}. First observe that Assumption
\ref{assum:gen:identif:unique:pi} immediately follows from \autoref{rem:A1-hmm}
and \autoref{lem:exemple:hmm}, and Assumptions \ref{assum:q:positive} and \ref{assum:g-unif-bounded} directly follow from the
positiveness of the density $r$ and the
boundedness of the density $h$, respectively. Now, using \ref{item:q}-\ref{item:compact}, it
can be easily shown that all compact sets are local Doeblin sets and this in
turn implies, via $\lim_{|x| \to \infty} h(x)=0$, that Assumption
\ref{assum:fullyDominated:filter}-\ref{item:condition-L-K} is satisfied. We now
check \ref{assum:fullyDominated:filter}-\ref{item:mino-g-simple}. By
\ref{item:q}-\ref{item:compact}, we have for all compact sets $D$,
$\inf\set{r(x'-x+m)}{(x,x',m) \in D^2 \times [\underline{m},\overline{m}]} >0$,
which by \eqref{eq:expression:q} implies
$$
\inf_{\theta \in \Theta}\inf_{x \in D} Q^\theta(x;D) > 0\eqsp.
$$
To obtain \ref{assum:fullyDominated:filter}-\ref{item:mino-g-simple}, it thus remains to show
$$
\tilde\PE^\thv\left[\ln^- \inf_{\theta \in \Theta} \inf_{x \in D} g^\theta(x; Y_0)\right] < \infty\eqsp.
$$
By \ref{item:g}-\ref{item:g:moment:mino}, there exist positive constants $b$ and $c$ such that $h(v)\geq b \rme^{-c |v|^\beta}$. Plugging this into \eqref{eq:expression:g} yields
\begin{multline*}
\tilde\PE^\thv\left[\ln^- \inf_{\theta \in \Theta} \inf_{x \in D} g^\theta(x; Y_0)\right] \leq \tilde\PE^\thv\left[ |\ln b|+c(|Y_0|+\overline{a} \sup_{x \in D}|x|)^\beta\right]\\
=\PE^\thv\left[ |\ln b|+c(|aX_0+V_0|+\overline{a} \sup_{x \in D}|x|)^\beta\right]<\infty\eqsp,
\end{multline*}
where the finiteness follows from \ref{item:g}-\ref{item:g:moment:mino} and
\autoref{lem:exemple:hmm}. Finally, \ref{assum:fullyDominated:filter} is
satisfied.  \ref{assum:majo-g} is checked by writing
$$
\displaystyle\tilde \PE^\thv\left[ \lnp\sup_{\theta \in \Theta} \sup_{x \in \Xset} g^\theta(x;Y_0)\right] \leq \lnp \sup_{x \in \rset} h(x)<\infty\eqsp.
$$

To obtain \ref{assum:continuity}, we show by induction on $n$ that for all $n\geq1$, $\chunk{y}{0}{n-1} \in \rset^n$ and $x_0 \in \rsetp$, the function $\theta \mapsto \Pblock[\theta]{\chunk{y}{0}{n-1}}(x_0;\Xset)$ is continuous on $\Theta$. The case where $n=1$ is obvious since $\Pblock[\theta]{y_0}(x_0;\Xset)=g^\theta(x_0;y_0)=h(y_0-ax_0)$. We next assume the induction hypothesis on $n$ and note that
$$
\Pblock[\theta]{\chunk{y}{0}{n}}(x_0;\Xset)=g^\theta(x_0;y_0)\int \mu(\rmd x_1) q^\theta(x_0;x_1) \Pblock[\theta]{\chunk{y}{1}{n}}(x_1;\Xset)\eqsp.
$$
The continuity of $\theta \mapsto g^\theta(x_0;y_0)$ follows from
\eqref{eq:expression:g} and the continuity of $h$. Similarly, the continuity of
$\theta \mapsto q^\theta(x_0;x_1)$ follows from \eqref{eq:expression:q} and the
continuity of $r$. Moreover, $\theta \mapsto
\Pblock[\theta]{\chunk{y}{1}{n}}(x_1;\Xset)$ is continuous by the induction
assumption. The continuity of $\theta \mapsto \int \mu(\rmd x_1)
q^\theta(x_0;x_1) \Pblock[\theta]{\chunk{y}{1}{n}}(x_1;\Xset)$ then follows
from the Lebesgue convergence theorem provided that
\begin{equation}
\label{eq:hmm:exemple:last}
\int \mu(\rmd x_1) \sup_{\theta \in \Theta} q^\theta(x_0;x_1) \Pblock[\theta]{\chunk{y}{1}{n}}(x_1;\Xset)<\infty
\end{equation}
holds. Note further that by the expression of $q^\theta(x_0;x_1)$ given in
\eqref{eq:expression:q} and the tail assumption \ref{item:q}-\ref{item:tail:q},
we obtain for all $x_0\in\Xset$,
$$
\int \mu(\rmd x_1) \sup_{\theta \in \Theta} q^\theta(x_0;x_1)<\infty\eqsp.
$$
Combining with that $\Pblock[\theta]{\chunk{y}{1}{n}}(x_1;\Xset) \leq (\sup_{x \in \rset} h(x))^n$ yields \eqref{eq:hmm:exemple:last}. Finally, we have \ref{assum:continuity}, and thus \autoref{thm:consistanceY} holds under \ref{item:q} and \ref{item:g}.
\end{proof}

\section{Application to observation-driven models}\label{sec:observ-driv-model}

Observation-driven models are a subclass of partially dominated and
partially observed Markov models.

We split our study of the observation-driven model into several parts. Specific
definitions and notation are introduced in
\autoref{sec:definitions-notation-od}. Then we provide sufficient conditions
that allow to apply our general result \autoref{thm:mainresult}, that is,
$\Theta_\star=[\thv]$.  This is done in \autoref{sec:identifiability-ODM}.
\subsection{Definitions and notation}
\label{sec:definitions-notation-od}
Observation-driven models are formally defined as follows.
\begin{definition}
  Consider a partially observed and partially dominated Markov model given in \autoref{def:partially-dom} with Markov
  kernels $\nsequence{K^\theta}{\theta\in\Theta}$. We say that this model is an observation-driven model if the kernel $K^\theta$ satisfies
\begin{equation}\label{eq:def:observation-driven}
K^\theta((x,y);\rmd x'\rmd y') =  \delta_{\psi^\theta_{y}(x)}(\rmd x') \;
G^\theta (x';\rmd y') \eqsp,
\end{equation}
where $\delta_a$ denotes the Dirac mass at point $a$, $G^\theta$ is a
probability kernel on $\Xset \times \Ysigma$ and $\nsequence{(x,y) \mapsto
\psi^\theta_y(x)}{\theta \in \Theta}$ is a family of measurable functions
from $(\Xset\times \Yset, \Xsigma \otimes \Ysigma)$ to $(\Xset, \Xsigma)$.
Moreover, in this context, we always assume that $(\Xset,\Xmet)$ is a complete
separable metric space and $\Xsigma$ denotes the associated Borel
$\sigma$-field.
\end{definition}
Note that a  Markov chain $\dsequence{X}{Y}[k][\zsetp]$ with probability kernel given
by~\eqref{eq:def:observation-driven} can be equivalently defined by the following recursions
\begin{align}
\begin{split}
& X_{k+1}=\psi^\theta_{Y_{k}}(X_k) \eqsp, \label{eq:def:XY:theta:cl} \\
& Y_{k+1}| \chunk{X}{0}{k+1},\chunk{Y}{0}{k}\sim G^\theta (X_{k+1};\cdot) \eqsp.
\end{split}
\end{align}
The most celebrated example is the GARCH$(1,1)$ process, where
$G^\theta(x;\cdot)$ is a centered (say Gaussian) distribution with variance $x$ and
$\psi^\theta_y(x)$ is an affine function of $x$ and $y^2$.

As a special case of \autoref{def:partially-dom}, for all
$x \in \Xset$, $G^\theta (x;\cdot)$ is dominated by some $\sigma$-finite
measure $\nu$ on $(\Yset,\Ysigma)$ and we denote by $g^\theta (x;\cdot)$ its Radon-Nikodym
derivative, $g^\theta (x;y)=\frac{\rmd G^\theta (x;\cdot)}{\rmd
  \nu}(y)$. A dominated parametric observation-driven model is thus defined by the
collection $\nsequence{(g^\theta,\psi^\theta)}{\theta\in\Theta}$. Moreover,~(\ref{eq:g-evrywhere-strictly-positive}) may be rewritten in this case: for all $(x,y) \in \Xset \times \Yset$ and for all $\theta\in\Theta$,
\begin{equation*}
g^\theta(x;y)>0\;.
\end{equation*}
Under \ref{assum:gen:identif:unique:pi}, we assume that the model is well-specified, i.e., the
observation sample $(Y_1,\ldots,Y_n)$ is distributed according to
$\tilde\PP^{\thv}$ for some unknown parameter $\thv$. The inference of $\thv$ is based on
the conditional likelihood of $(Y_1,\ldots,Y_n)$ given $X_1=x$ for an
arbitrary $x\in\Xset$. The corresponding density function with respect to
$\nu^{\otimes n}$ is, under parameter $\theta$,
\begin{equation} \label{eq:lkd:Y:cl:X1=x}
\chunk{y}{1}{n} \mapsto \prod_{k=1}^{n} g^\theta\left(\f{\chunk{y}{1}{k-1}}(x);y_{k}\right) \eqsp,
\end{equation}
where, for any vector $\chunk{y}{1}{p}=(y_1,\dots,y_p)\in\Yset^p$,
$\f{\chunk{y}{1}{p}}$ is the $\Xset\to\Xset$ function defined as the successive
composition of  $\psi^\theta_{y_1}$,  $\psi^\theta_{y_2}$, ..., and $\psi^\theta_{y_p}$,
\begin{equation}
\label{eq:notationItere:f:cl}
\f{\chunk{y}{1}{p}}=\psi^\theta_{y_p} \circ \psi^\theta_{y_{p-1}} \circ \dots \circ \psi^\theta_{y_1}\,,
\end{equation}
with the convention $\f{\chunk{y}{s}{t}}(x)=x$ for $s>t$. Then the corresponding
(conditional) MLE $\mlY{x,n}$ of the parameter $\theta$ is defined by
\begin{equation}
\label{eq:defi:mle}
\mlY{x,n} \in \argmax_{\theta \in \Theta} \lkdM[x,n]{\chunk{Y}{1}{n}}[\theta]\eqsp,
\end{equation}
where
\begin{equation}
\label{eq:defi:lkdM}
\lkdM[x,n]{\chunk{y}{1}{n}}[\theta]\eqdef   n^{-1}\ln \left( \prod_{k=1}^{n} g^\theta\left(\f{\chunk{y}{1}{k-1}} (x);y_k\right) \right)\eqsp.
\end{equation}

We will provide simple conditions for the consistency of $\mlY{x,n}$ in the
sense that, with probability tending to one, for a well chosen $x$,
$\mlY{x,n}$ belongs to a neighborhood of the equivalence class $[\thv]$ of $\thv$, as
given by \autoref{def:equi:theta}.
\subsection{Identifiability}
\label{sec:identifiability-ODM}
Let us consider the following assumptions.
\begin{hyp}{C}
\item \label{assum:exist:x0:O} For all $\theta\neq\thv\in\Theta$, there exist $x \in\Xset$ and a
  measurable function $\f[\theta,\thv]{\cdot}$ defined on
  $\Yset^{\zsetn}$ such that
\begin{equation}
  \label{eq::exist:x0:O}
\lim_{m \to \infty}\f{\chunk{Y}{-m}{0}}(x)=\f[\theta,\thv]{\chunk{Y}{-\infty}{0}}, \quad \tilde \PP^{\thv}\as
\end{equation}
\item \label{assum:g-cont:O} For all $\theta\in\Theta$ and $y\in\Yset$, the function
  $x\mapsto g^\theta(x; y)$ is continuous on $\Xset$.
\item \label{assum:phi-cont:O} For all $\theta\in\Theta$ and $y\in\Yset$, the function $x\mapsto\psi^\theta_y(x)$ is continuous on $\Xset$.
\end{hyp}
In observation-driven models, the kernel $\kappa^\theta$ defined in~\eqref{eq:kappa} reads
\begin{align}\nonumber
\kap{y, y'}{x; \rmd x'}&=g^\theta(x';y')\;
\delta_{\psi^\theta_{y}(x)}(\rmd x')\\
  \label{eq:kap:O}
&=g^\theta\left(\psi^\theta_{y}(x);y'\right)\; \delta_{\psi^\theta_{y}(x)}(\rmd x')\eqsp,
\end{align}
and the probability kernel $\Phi_{x,n}^\theta$ in \autoref{def:Phi:x:f} reads, for all $x\in\Xset$ and $\chunk{y}{0}{n}\in\Yset^{n+1}$,
\begin{equation}
  \label{eq:Phi:O}
\Phi_{x,n}^\theta(\chunk{y}{0}{n}; \cdot)=\delta_{\f{\chunk{y}{0}{n-1}}(x)}
\end{equation}
(the Dirac point mass at $\f{\chunk{y}{0}{n-1}}(x)$).
Using these expressions, we get the following result which is a special case of
\autoref{thm:mainresult}.
\begin{theorem}\label{thm:main:result:obs}
Assume that~\ref{assum:gen:identif:unique:pi} holds in the observation-driven
model setting and define
$\mathbb{P}^{\theta}$, $\tilde{\mathbb{P}}^{\theta}$ and $[\theta]$ as in
\autoref{def:equi:theta}. Suppose that
Assumptions~\ref{assum:exist:x0:O},~\ref{assum:g-cont:O} and~\ref{assum:phi-cont:O} hold and define
    $p^{\theta,\thv}(\cdot|\cdot)$ by setting, for
  $\tilde\PP^{\thv}$-a.e. $\chunk{y}{-\infty}{0}\in\Yset^{\zsetn}$,
  \begin{equation}
    \label{eq:def-p1-od}
p^{\theta,\thv}(y_1\,|\,\chunk{y}{-\infty}{0}) =
\begin{cases}
  g^\theta\left(\f[\theta,\thv]{\chunk{y}{-\infty}{0}};y_1\right)&\text{ if
    $\theta\neq\thv$,}\\
p_1^\theta(y_1\,|\,\chunk{y}{-\infty}{0})\text{ as defined by~(\ref{eq:p-theta-density})}&\text{ otherwise.}
\end{cases}
  \end{equation}
Then, for all $\thv\in\Theta$, we have
\begin{equation}\label{eq:ThetaStar-included-class-od}
\argmax_{\theta\in\Theta}\tilde{\PE}^{\thv}\left[\ln
  p^{\theta,\thv}(Y_1|\chunk{Y}{-\infty}{0})\right] = [\thv]\;.
\end{equation}
\end{theorem}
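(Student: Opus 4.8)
The plan is to obtain Theorem~\ref{thm:main:result:obs} as a direct specialization of \autoref{thm:mainresult}. Concretely, I would produce, for every $\theta\neq\thv$, a probability kernel $\Phi^{\theta,\thv}$ on $\Yset^{\zsetn}\times\Xsigma$ that satisfies Assumption~\ref{assum:exist:phi:theta}, and then check that the density $p_1^{\theta,\thv}(\cdot\mid\cdot)$ built from it through~\eqref{eq:def-p_1} coincides, $\tilde\PP^{\thv}$-a.s., with the function prescribed in~\eqref{eq:def-p1-od}. Once both points are settled, \autoref{thm:mainresult} applies verbatim and yields~\eqref{eq:ThetaStar-included-class-od}; the diagonal case $\theta=\thv$ needs no separate treatment, since both \autoref{thm:mainresult} and~\eqref{eq:def-p1-od} set $p^{\thv,\thv}=p_1^{\thv}$ as in~\eqref{eq:p-theta-density}.

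To construct $\Phi^{\theta,\thv}$, fix $\theta\neq\thv$ and let $x\in\Xset$ be the point provided by Assumption~\ref{assum:exist:x0:O}. In an observation-driven model, \eqref{eq:Phi:O} says that $\Phi_{x,m}^{\theta}(\chunk{Y}{-m}{0};\cdot)$ is the Dirac mass at $\f{\chunk{Y}{-m}{-1}}(x)$, i.e.\ at the value at time $0$ of the recursion $X_{k+1}=\psi^\theta_{Y_k}(X_k)$ started from $x$ at time $-m$. By Assumption~\ref{assum:exist:x0:O} together with the shift-invariance of $\tilde\PP^{\thv}$, the sequence $m\mapsto\f{\chunk{Y}{-m}{-1}}(x)$ also converges $\tilde\PP^{\thv}$-a.s.; I would denote its limit by $Z(\chunk{Y}{-\infty}{0})$, a measurable function of $\chunk{Y}{-\infty}{0}$ (completed arbitrarily on the exceptional null set), and set $\Phi^{\theta,\thv}(\chunk{y}{-\infty}{0};\cdot):=\delta_{Z(\chunk{y}{-\infty}{0})}$. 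Assumption~\ref{assum:exist:phi:theta} would then be obtained via \autoref{lem:help:I2}: take $\mathcal{F}$ a countable separating class of bounded continuous $\Xset\to\rsetp$ functions containing $\1_\Xset$ (it exists by~\cite[Theorem~6.6,~Chapter~6]{parthasarathy:2005}); by~\eqref{eq:kap:O}, for $f\in\mathcal{F}$ and $(y,y')\in\Yset^2$ one has $\kap{y,y'}{\cdot;f}=g^\theta(\psi^\theta_y(\cdot);y')\,f(\psi^\theta_y(\cdot))$, which Assumptions~\ref{assum:g-cont:O} and~\ref{assum:phi-cont:O} make continuous, so every $f'\in\mathcal{F}_f^\theta\cup\{f\}$ is continuous and $f'(\f{\chunk{Y}{-m}{-1}}(x))\to f'(Z(\chunk{Y}{-\infty}{0}))$, a finite value, on the single a.s.\ event where $\f{\chunk{Y}{-m}{-1}}(x)$ converges; this is exactly Assumption~\ref{assum:help:exist:phi}, whence \autoref{lem:help:I2} gives Assumption~\ref{assum:exist:phi:theta}. (Alternatively, since everything in sight is a Dirac mass, \ref{assum:exist:phi:theta} can be verified by hand directly from~\eqref{eq:kap:O}, using only~\ref{assum:phi-cont:O} and the positivity of $g^\theta$.)

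It then remains to match $p_1^{\theta,\thv}$ with~\eqref{eq:def-p1-od}. Writing $Z:=Z(\chunk{y}{-\infty}{0})$, Eq.~\eqref{eq:def-p_1}, the Dirac form of $\Phi^{\theta,\thv}$ and~\eqref{eq:kap:O} give, for $\tilde\PP^{\thv}$-a.e.\ $\chunk{y}{-\infty}{0}$,
\[
p_1^{\theta,\thv}(y_1\,|\,\chunk{y}{-\infty}{0}) = \kap{y_0,y_1}{Z;\Xset} = g^\theta\bigl(\psi^\theta_{y_0}(Z);y_1\bigr) = g^\theta\bigl(\f[\theta,\thv]{\chunk{y}{-\infty}{0}};y_1\bigr)\eqsp,
\]
where the last equality uses the cocycle identity $\psi^\theta_{y_0}\circ\f{\chunk{y}{-m}{-1}}=\f{\chunk{y}{-m}{0}}$ and the continuity of $\psi^\theta_{y_0}$ (Assumption~\ref{assum:phi-cont:O}) to pass to the limit, namely $\psi^\theta_{y_0}(Z)=\lim_m\f{\chunk{y}{-m}{0}}(x)=\f[\theta,\thv]{\chunk{y}{-\infty}{0}}$ by Assumption~\ref{assum:exist:x0:O}. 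This is precisely~\eqref{eq:def-p1-od} for $\theta\neq\thv$, so \autoref{thm:mainresult} applies and delivers~\eqref{eq:ThetaStar-included-class-od}.

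The one genuinely delicate point, and the one I would be most careful about, is the index bookkeeping: the filtering kernel $\Phi_{x,m}^\theta(\chunk{Y}{-m}{0};\cdot)$ is concentrated at $\f{\chunk{Y}{-m}{-1}}(x)$, the $m$-step approximation of $X_0$, whereas Assumption~\ref{assum:exist:x0:O} controls $\f{\chunk{Y}{-m}{0}}(x)$, the $m$-step approximation of $X_1$; reconciling the two — and turning one limit into the other through $\psi^\theta_{y_0}$ — relies on the shift-invariance of $\tilde\PP^{\thv}$ and on commuting the a.s.\ limit with the continuous maps supplied by Assumptions~\ref{assum:g-cont:O}--\ref{assum:phi-cont:O}. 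Everything else is a routine transcription of the Dirac structure~\eqref{eq:kap:O}--\eqref{eq:Phi:O} into the hypotheses of \autoref{thm:mainresult} and \autoref{lem:help:I2}.
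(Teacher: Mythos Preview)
Your proposal is correct and follows essentially the same route as the paper's own proof: both apply \autoref{thm:mainresult} by defining $\Phi^{\theta,\thv}$ as the Dirac mass at the a.s.\ limit of $\f{\chunk{Y}{-m}{-1}}(x)$, verify \ref{assum:exist:phi:theta} through \autoref{lem:help:I2} using a countable separating class of bounded continuous functions (with \ref{assum:g-cont:O}--\ref{assum:phi-cont:O} ensuring $\mathcal{F}_f^\theta\subset\mathcal{C}(\Xset)$), and then identify $p_1^{\theta,\thv}$ with $g^\theta(\f[\theta,\thv]{\chunk{Y}{-\infty}{0}};\cdot)$ via the cocycle relation $\psi^\theta_{y_0}\circ\f{\chunk{y}{-m}{-1}}=\f{\chunk{y}{-m}{0}}$ and the continuity of $\psi^\theta_{y_0}$. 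Your explicit flagging of the index shift (filter at $\f{\chunk{Y}{-m}{-1}}(x)$ versus the limit in \ref{assum:exist:x0:O} for $\f{\chunk{Y}{-m}{0}}(x)$) and its resolution through shift-invariance matches exactly what the paper does when it writes $\Phi^{\theta,\thv}(\chunk{y}{-\infty}{0};\cdot)=\delta_{\f[\theta,\thv]{\chunk{y}{-\infty}{-1}}}$.
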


\begin{proof}
We apply \autoref{thm:mainresult}.  It is thus sufficent to show that
\ref{assum:exist:x0:O},~\ref{assum:g-cont:O} and~\ref{assum:phi-cont:O}
  implies~\ref{assum:exist:phi:theta} with
\begin{equation}\label{eq:def-Phi-od}
\Phi^{\theta,\thv}(\chunk{y}{-\infty}{0};\cdot)=\delta_{\f[\theta,\thv]{\chunk{y}{-\infty}{-1}}},\quad
\text{for all $\chunk{y}{-\infty}{0}\in\Yset^{\zsetn}$}\;,
\end{equation}
and that for $\theta\neq\thv$,  the conditional density $p^{\theta,\thv}$ defined by~(\ref{eq:def-p_1}) satisfies
\begin{equation}
\label{eq:def-p_1-od}
p^{\theta,\thv}(y|\chunk{Y}{-\infty}{0}) = g^\theta\left(\f[\theta,\thv]{\chunk{Y}{-\infty}{0}};y\right)\eqsp,
\quad\tilde\PP^{\thv}\as
\end{equation}

By \autoref{lem:help:I2},
it is sufficient to prove that Assumption~\ref{assum:help:exist:phi} holds for
the kernel $\Phi^{\theta,\thv}$ defined above.  Denote by
$\mathcal{C}(\Xset)$  the set
of continuous functions on $\Xset$, and by $\mathcal{C}_b(\Xset)$ the set of
bounded functions in $\mathcal{C}(\Xset)$. By~\cite[Theorem~6.6,
Chapter~6]{parthasarathy:2005}, there is a countable and separating subclass
$\mathcal{F}$ of nonnegative functions in $\mathcal{C}_b(\Xset)$ such that
$\1_\Xset\in\mathcal{F}$. Now, let us take $\theta,\thv\in\Theta$ and
$f\in\mathcal{F}$.  Then, by~\ref{assum:g-cont:O}, \ref{assum:phi-cont:O}
and~(\ref{eq:kap:O}), we
have
$$
\mathcal{F}_f^\theta = \left\{x \mapsto \kap{y, y'}{x;
    f}:\; (y,y')\in\Yset^2\right\}\subset\mathcal{C}(\Xset)\;.
$$
By~(\ref{eq:Phi:O}),~\ref{assum:exist:x0:O} and~(\ref{eq:def-Phi-od}), we
obtain~\ref{assum:help:exist:phi} with $x$ chosen as in~\ref{assum:exist:x0:O}.

To conclude, we need to show~(\ref{eq:def-p_1-od}).  Note that~(\ref{eq:def-Phi-od})
together with~(\ref{eq:kap:O}) and the usual definition~(\ref{eq:def-p_1}) of
$p^{\theta,\thv}$ yields
$$
p^{\theta,\thv}(y|\chunk{y}{-\infty}{0}) =g^\theta\left(\psi^\theta_{y_0}\left(\f[\theta,\thv]{\chunk{y}{-\infty}{-1}}\right);y\right)\;.
$$
By Assumption~\ref{assum:phi-cont:O} and the definition of
$\f[\theta,\thv]{\cdot}$ in~\ref{assum:exist:x0:O}, we get~(\ref{eq:def-p_1-od}).
\end{proof}

\subsection{Examples}
In the context of observation-driven time series, easy-to-check conditions are
derived in \cite{douc2015handy} in order to establish the convergence
of the MLE $\mlY{x,n}$ defined by~(\ref{eq:defi:mle}) to the maximizing set of
the asymptotic normalized log-likelihood. It turns out that the conditions of
\cite[Theorem~3]{douc2015handy} also imply the conditions of
\autoref{thm:main:result:obs}.  More precisely, the assumptions (B-2) and
(B-3) of \cite[Theorem~1]{douc2015handy} are stronger than
\ref{assum:g-cont:O} and \ref{assum:phi-cont:O} used in
\autoref{thm:main:result:obs} above, and it is shown that the assumptions of
\cite[Theorem~1]{douc2015handy} imply \ref{assum:exist:x0:O} (see the
proof of Lemma~2 in Section~6.3 of \cite{douc2015handy}). Moreover, the
conditions of Theorem~1 are shown to be satisfied in the context of
Examples~\ref{example:Nbigarch:defi} and~\ref{example:nmgarch:defi} (see
\cite[Theorem~3 and Theorem~4]{douc2015handy}), provided that $\Theta$
in~(\ref{eq:defi:mle}) is a compact metric space such that
\begin{enumerate}
\item in the case of \autoref{example:Nbigarch:defi}, all $\theta=(\omega, a, b,r)\in\Theta$ satisfy $rb+a<1$;
\item in the case of \autoref{example:nmgarch:defi}, all $\theta=\left(\boldsymbol{\gamma}, \boldsymbol{\omega}, \mathbf{A},
  \mathbf{b}\right)\in\Theta$ are such that the spectral radius of
$\mathbf{A}+\mathbf{b}\boldsymbol{\gamma}^T$ is strictly less than 1.
\end{enumerate}
Under these assumptions, we conclude that the MLE is equivalence-class
consistent for both examples, which up to our best knowledge had not been proven so far.

\begin{appendix}

\section{Postponed proofs}
\label{sec:postponed-proofs}

\subsection{Proof of Eq.~\eqref{eq:phi:theta:theta}}
\label{sec:proof-eq10}
Let $\theta\in\Theta$. Recall that in \autoref{rem:intuition:kernel},
$\Phi^{\theta}$  is defined as the probability kernel of the conditional
distribution of $X_0$ given $\chunk{Y}{-\infty}{0}$ under $\PP^\theta$, that
is, for all $A\in\Xsigma$,
\begin{equation*}
\Phi^{\theta}(\chunk{Y}{-\infty}{0};A)= \PP^{\theta}\left(X_0\in A\left.\right|\chunk{Y}{-\infty}{0}\right), \quad \tilde\PP^\theta\as
\end{equation*}
Conditioning on $X_0,Y_0$ and using the definition of $\kappa^\theta$
in~(\ref{eq:kappa}), we get that, for all $A\in\Xsigma, B\in\Ysigma$ and
$C\in\Ysigma^{\otimes{\zsetn}}$,
\begin{align}\nonumber
&\PP^\theta\left(X_1\in A, Y_1\in B, \chunk{Y}{-\infty}{0}\in C\right) \\\nonumber
&=\PE^\theta\left[\int_B\kap{Y_0,y_1}{X_0;A} \1_{C}(\chunk{Y}{-\infty}{0})\;\nu(\rmd y_1)\right] \\
\label{eq:app:prob:phi}
&=\tilde\PE^\theta\left[\int_{\Xset\times B} \Phi^{\theta}(\chunk{Y}{-\infty}{0};\rmd x_0)\kap{Y_0,y_1}{x_0;A} \1_{C}(\chunk{Y}{-\infty}{0})\;\nu(\rmd y_1)\right]\eqsp.
\end{align}
Let us denote
$$
\hat\Phi^{\theta}(\chunk{Y}{-\infty}{0},y_1;A)
=\frac{\int \Phi^{\theta}(\chunk{Y}{-\infty}{0};\rmd x_0)\kap{Y_0,y_1}{x_0;A}}{\int \Phi^{\theta}(\chunk{Y}{-\infty}{0};\rmd x_0)\kap{Y_0,y_1}{x_0;\Xset}}\;,
$$
which is always defined since the denominator does not vanish
by~\autoref{rem:kappa-is-positive}. With this notation, we deduce
from~(\ref{eq:app:prob:phi}) that
\begin{multline*}
\PP^\theta\left(X_1\in A, Y_1\in B, \chunk{Y}{-\infty}{0}\in C\right)\\
=\tilde\PE^\theta\Bigg[\int_B \hat\Phi^{\theta}(\chunk{Y}{-\infty}{0},y_1;A)
\left(\int \Phi^{\theta}(\chunk{Y}{-\infty}{0};\rmd x_0)\kap{Y_0,y_1}{x_0;\Xset}\right)\\
\1_{C}(\chunk{Y}{-\infty}{0})\;\nu(\rmd y_1)\Bigg]\eqsp.
\end{multline*}
This can be more compactly written as
\begin{multline}\label{eq:joint-proba-X1-Y0infty}
\PP^\theta\left(X_1\in A, Y_1\in B, \chunk{Y}{-\infty}{0}\in C\right)
=\tilde\PE^\theta\Bigg[\int \hat\Phi^{\theta}(\chunk{Y}{-\infty}{0},y_1;A)\\
\1_B(y_1)\,\1_{C}(\chunk{Y}{-\infty}{0})
\Phi^{\theta}(\chunk{Y}{-\infty}{0};\rmd x_0)\kap{Y_0,y_1}{x_0;\Xset}
\;\nu(\rmd y_1)\Bigg]\eqsp.
\end{multline}
Observe that~(\ref{eq:app:prob:phi}) with $A=\Xset$ provides
a way to write $\tilde\PE^\theta\left[g(\chunk{Y}{-\infty}{0},Y_1)\right]$ for
$g=\1_{C\times B}$ that can be extended to any nonnegative measurable function $g$ defined on
$\Yset^{\zsetn}\times\Yset$ as
\begin{multline*}
\tilde\PE^\theta\left[g(\chunk{Y}{-\infty}{0},Y_1)\right] \\
=\tilde\PE^\theta\left[\int
g(\chunk{Y}{-\infty}{0},y_1)  \Phi^{\theta}(\chunk{Y}{-\infty}{0};\rmd x_0)\kap{Y_0,y_1}{x_0;\Xset}
\nu(\rmd y_1)\right]\eqsp.
\end{multline*}
Now, we observe that the right-hand side of~(\ref{eq:joint-proba-X1-Y0infty})
can be interpreted as the right-hand side of the previous display with
$g(\chunk{Y}{-\infty}{0},y_1)=\hat\Phi^{\theta}(\chunk{Y}{-\infty}{0},y_1;A)\1_B(y_1)\1_{C}(\chunk{Y}{-\infty}{0})$.
Hence, we conclude that,
for all $A\in\Xsigma$ and $C\in\Ysigma^{\otimes{\zsetn}}$,
$$
\PP^\theta\left(X_1\in A, Y_1\in B, \chunk{Y}{-\infty}{0}\in C\right)
=\PE^\theta\Bigg[\hat\Phi^{\theta}(\chunk{Y}{-\infty}{0},Y_1;A)
\1_B(Y_1)\1_{C}(\chunk{Y}{-\infty}{0})\Bigg]\eqsp.
$$
Notice that
$\hat\Phi^{\theta}(\chunk{Y}{-\infty}{0},Y_1;A)$ precisely is the probability
kernel on $\left(\Yset^{\zsetn}\times\Yset\right)\times\Xsigma$ appearing on the
left-hand side of~(\ref{eq:phi:theta:theta}). The last display implies that
this probability kernel is the conditional distribution of $X_1$ given
$\chunk{Y}{-\infty}{1}$ under $\PP^\theta$, which concludes the proof of~(\ref{eq:phi:theta:theta}).
\subsection{Proof of~\autoref{lem:imp:p;seq}}
\label{sec:proof-lem:imp}
First observe that,  by induction on $n$, having ~\eqref{eq:sub:lem:p}
for all $n\geq2$ is equivalent to having, for all $n\geq2$,
\begin{multline*}
p^{\theta,\thv} (\chunk{Y}{1}{n}|\chunk{Y}{-\infty}{0}) \\
=p^{\theta,\thv}(Y_{n}|\chunk{Y}{-\infty}{n-1})
p^{\theta,\thv}(Y_{n-1}|Y_{-\infty:n-2})\cdots p^{\theta,\thv} (Y_{1}|\chunk{Y}{-\infty}{0}),\ \tilde{\PP}^{\thv}\as\eqsp,
\end{multline*}
which, using that $\tilde{\PP}^{\thv}$ is shift-invariant,
is in turn equivalent to having that, for all $n\geq2$,
\begin{equation}\label{eq:sub:lem:p2}
p^{\theta,\thv} (\chunk{Y}{1}{n}|\chunk{Y}{-\infty}{0})
=p^{\theta,\thv} (\chunk{Y}{2}{n}|\chunk{Y}{-\infty}{1})p^{\theta,\thv} (Y_{1}|\chunk{Y}{-\infty}{0}), \quad \tilde{\PP}^{\thv}\as
\end{equation}
Thus to conclude the proof, we only need to show that~(\ref{eq:sub:lem:p2})
holds for all $n\geq2$. By \autoref{def:cond-densities-given-past}, we have, for all
$n\geq2$ and $y_{-\infty:n}\in\Yset^{\zset_-}$,
\begin{multline*}
p^{\theta,\thv} (\chunk{y}{2}{n}|\chunk{y}{-\infty}{1})p^{\theta,\thv} (y_{1}|\chunk{y}{-\infty}{0})\\
=\int\Phi^{\theta,\thv}(\chunk{y}{-\infty}{1};\rmd x_1)p^{\theta,\thv} (y_{1}|\chunk{y}{-\infty}{0})\prod_{k=1}^{n-1} \kap{y_{k}, y_{k+1}}{x_{k}; \rmd x_{k+1}}\eqsp.
\end{multline*}
Using~\ref{assum:exist:phi:theta} we now get, for all $n\geq2$,
\begin{multline*}
p^{\theta,\thv} (\chunk{Y}{2}{n}|\chunk{Y}{-\infty}{1})p^{\theta,\thv}
(Y_{1}|\chunk{Y}{-\infty}{0})\\
=\int\Phi^{\theta,\thv}(\chunk{Y}{-\infty}{0};\rmd x_0)\prod_{k=0}^{n-1}
\kap{Y_{k}, Y_{k+1}}{x_{k}; \rmd x_{k+1}},\quad  \tilde{\PP}^{\thv}\as
\end{multline*}
We conclude ~(\ref{eq:sub:lem:p2}) by observing that, according to
\autoref{def:cond-densities-given-past}, the second line of the last display is
$p^{\theta,\thv} (\chunk{Y}{1}{n}|\chunk{Y}{-\infty}{0})$.
\subsection{Proof of~\autoref{lem:exemple:hmm}}
\label{sec:app:example:hmm}
Let $\beta \in [1,\alpha-1)$. Since $1+\beta<\alpha$ and by \ref{item:q}-\ref{item:tail:q}, we obtain
$\PE\left[(U_0^+)^{1+\beta}\right]<\infty$. Combining this with $\PE(U_0-m)=-m<0$, we may apply
\cite[Proposition 5.1]{jarner:roberts:2002} so that the Markov kernel
$Q^\theta$ is polynomially ergodic and thus admits a unique stationary
distribution $\pi_1^\theta$, which is well defined on $\Xset=\rsetp$. Moreover,
\cite[Proposition 5.1]{jarner:roberts:2002} also shows that there exist a
finite interval $C = [0,x_0]$ and some constants $\varrho, \varrho' \in(0,\infty)$ such that
$$
Q^\theta V\leq V-\varrho W +\varrho' \1_C\eqsp,
$$
where $V(x) = (1+x)^{1+\beta}$ and $W(x)=(1+x)^{\beta}$. Applying \cite[Theorem 14.0.1]{meyn:tweedie:1993} yields
$$\int \pi_1^\theta(\rmd x) x^{\beta} \leq \pi_1^\theta W<\infty\eqsp.$$
It remains to show that the kernel $Q^\theta$ is not geometrically ergodic for all $\theta \in \Theta$ and this will be done by contradiction.

Now suppose on the contrary that the kernel $Q^\theta$ is
geometrically ergodic for some $\theta \in \Theta$. Since the singleton
$\{0\}$ is an accessible atom (for $Q^\theta$),  then there exists some $\rho>1$ such that
$$\sum_{k=0}^\infty \rho^k \left|(Q^\theta)^k(0,\{0\})-\pi_1^\theta(\{0\})\right|<\infty\eqsp.$$
Hence, the atom $\{0\}$ is geometrically ergodic as defined in \cite[Section
15.1.3]{meyn:tweedie:1993}. Applying \cite[Theorem 15.1.5]{meyn:tweedie:1993},
then there exists some $\kappa>1$  such that $\PE_{0}[\kappa^{\tau_0}]<\infty$, where
$\tau_0=\inf \{n \geq 1:\; X_n=0\}$ is the first return time to $\{0\}$.

Recall that the i.i.d. sequence \sequence{U}[k][\zsetp] is linked to \sequence{X}[k][\zsetp] through \eqref{eq:def:exemple:hmm}, and note that $\PE_{0}[\kappa^{\tau_0}]=\PE[\kappa^{\tau(0)}]$, where we have set for all $u \in \rset$,
\begin{align*}
&\tau(u) \eqdef \inf\left\{n \geq 1:\; \sum_{k=1}^n (U_k-m)<u\right\} \eqsp.
\end{align*}
Now, denote
\begin{align*}
&\tilde \tau(u) \eqdef \inf\left\{n \geq 1:\; \sum_{k=1}^n (U_{k+1}-m)<u\right\}\eqsp.
\end{align*}
To arrive at the contradiction, it is finally sufficient to show that for all $\kappa>1$, $\PE[\kappa^{\tau(0)}]=\infty$. Actually, we will show that there exists a constant $\gamma >0$ such that
\begin{equation}
\label{eq:hmm:exemple:one}
\liminf_{u \to \infty}\kappa^{-\gamma u}\PE[\kappa^{\tau(-u+m)}] >0 \eqsp.
\end{equation}
This will indeed imply $\PE[\kappa^{\tau(0)}]=\infty$ by writing
\begin{align}
\PE[\kappa^{ \tau(0)}]&\geq \PE[\kappa^{\tau(0)} \1\{U_1 \geq m\}] = \PE[\kappa^{1+\tilde \tau(-U_1+m)} \1\{U_1 \geq m\}] \nonumber \\
&=\PE\left[\int_m^\infty \kappa^{1+\tilde \tau(-u+m)}  r(u) \rmd u\right]=\kappa \int_m^\infty  \PE[\kappa^{\tau(-u+m)}]  r(u) \rmd u  \eqsp, \label{eq:bidule}
\end{align}
where the last equality follows from $\tau\overset{d}{=} \tilde\tau$.  Provided
that \eqref{eq:hmm:exemple:one} holds, the right-hand side of \eqref{eq:bidule} is
infinite since $r(u)\gtrsim u^{-\alpha-1}$ as $u \to \infty$
by~\ref{item:q}-\ref{item:tail:q}.

We now turn to the proof of \eqref{eq:hmm:exemple:one}. By Markov's inequality, we have for any $\gamma>0$,
\begin{equation} \label{eq:hmm:exemple:zero}
\kappa^{-\gamma u}\PE[\kappa^{\tau(-u+m)}] \geq \PP(\tau(-u+m)>\gamma u)  \eqsp.
\end{equation}
Now, let $M_n=\sum_{k=1}^n U_{i}$, $n \geq 1$, and note that for all nonnegative $u$,
\begin{align}
\left\{\left(\inf_{1\leq k\leq \gamma u} M_k \right)-\gamma u m \geq -u+m\right\} &\subset \left\{\inf_{1\leq k\leq \gamma u} (M_k -k m)  \geq -u+m\right\} \nonumber\\
 &= \{\tau(-u+m)> \gamma u\} \eqsp. \label{eq:hmm:exemple:two}
\end{align}
Moreover, since \sequence{U}[k][\zsetpnz] is i.i.d. and centered, Doob's maximal inequality implies,  for all  $\tilde \gamma>0$,
\begin{align}
\PP\left(\inf_{1\leq k\leq \gamma u} M_k  < -\tilde \gamma\right)&\leq\PP\left(\sup_{1\leq k\leq \gamma u} |M_k|  > \tilde \gamma\right) \nonumber\\
&\leq \frac{\PE\left[|M_{\lfloor \gamma u\rfloor}|\right]}{\tilde \gamma}\leq \frac{\lfloor \gamma u\rfloor \PE\left[|U_1|\right]}{\tilde \gamma} \eqsp. \label{eq:hmm:exemple:three}
\end{align}
Now, pick $\gamma>0$ sufficiently small so that $\gamma \PE\left[|U_1|\right]/(1-\gamma
m)<1$. Observe that for this $\gamma$, $\tilde \gamma=(1-\gamma m)u-m$ is positive
for $u$ sufficiently large, so that combining \eqref{eq:hmm:exemple:three} with
\eqref{eq:hmm:exemple:two} and \eqref{eq:hmm:exemple:zero} yields
$$
\liminf_{u \to \infty} \kappa^{-\gamma u}\PE[\kappa^{\tau(-u+m)}] \geq 1-\limsup_{u \to \infty}\frac{\lfloor \gamma u\rfloor \PE\left[|U_1|\right]}{(1-\gamma m)u-m}= 1-\frac{\gamma \PE\left[|U_1|\right]}{1-\gamma m}>0\eqsp.
$$
This shows \eqref{eq:hmm:exemple:one} and the proof is completed.
\end{appendix}

\section*{Acknowledgements}
The authors are thankful to the anonymous referee for insightful comments and helpful suggestions that led to improving this paper.

\bibliographystyle{amsxport}
\bibliography{monybibs}

\end{document}